\documentclass{article}

\usepackage{geometry}

\usepackage[english]{babel}
\usepackage{amsmath,amsthm,amssymb}
\usepackage{tikz-cd}
\usepackage{dsfont}
\usepackage{picture,float,graphicx,listings}
\usepackage{hyperref,verbatim}

\usepackage{todonotes}

\newtheorem{theorem}{Theorem}[section]
\newtheorem{lemma}[theorem]{Lemma}
\newtheorem{corollary}[theorem]{Corollary}
\newtheorem{proposition}[theorem]{Proposition}
\theoremstyle{remark}
\newtheorem*{remark}{Remark}

\theoremstyle{definition}
\newtheorem{definition}[theorem]{Definition}
\newtheorem{example}[theorem]{Example}

\newcommand{\N}{\mathbb{N}}
\newcommand{\Z}{\mathbb{Z}}
\newcommand{\Q}{\mathbb{Q}}
\newcommand{\R}{\mathbb{R}}
\newcommand{\C}{\mathbb{C}}

\renewcommand{\H}{\mathbb{H}}

\renewcommand{\P}{\mathbb{P}}

\newcommand{\rk}{\mathrm{rk}}
\newcommand{\Rk}{\mathrm{Rk}}
\newcommand{\Deg}{\mathrm{Deg}}

\newcommand{\Hom}{\mathrm{Hom}}

\newcommand{\tr}{\mathrm{tr}}

\newcommand{\Vol}{\mathrm{Vol}}

\newcommand{\Coh}{\mathrm{Coh}}

\newcommand{\Id}{\mathrm{Id}}

\let\isanspoint\i
\renewcommand{\i}{\boldsymbol{\mathrm{i}}}
\newcommand{\e}{\boldsymbol{\mathrm{e}}}

\newcommand{\abs}[1]{\left| #1 \right|}

\newcommand{\cA}{\mathcal{A}}
\newcommand{\cE}{\mathcal{E}}
\newcommand{\cF}{\mathcal{F}}
\newcommand{\cG}{\mathcal{G}}
\newcommand{\cI}{\mathcal{I}}
\newcommand{\cL}{\mathcal{L}}
\newcommand{\cO}{\mathcal{O}}

\newcommand{\cS}{\mathcal{S}}
\newcommand{\cZ}{\mathcal{Z}}
\newcommand{\dbar}{\overline{\partial}}
\newcommand{\lex}{\leq_{\mathrm{lex}}}
\newcommand{\gex}{\geq_{\mathrm{lex}}}
\newcommand{\ltx}{<_{\mathrm{lex}}}
\newcommand{\gtx}{>_{\mathrm{lex}}}

\newcommand{\ch}{\mathrm{ch}}

\newcommand{\codim}{\mathrm{codim}}
\newcommand{\supp}{\mathrm{supp}}
\newcommand{\td}{\mathrm{td}}
\newcommand{\Td}{\mathrm{Td}}

\renewcommand{\leq}{\leqslant}
\renewcommand{\geq}{\geqslant}

\title{Polynomial Bridgeland Stability Conditions\\ on the Category of Coherent Sheaves}
\author{Rémi Delloque}
\date{}

\begin{document}

\maketitle

\begin{abstract}
    In this short note, we provide a broad class of examples of stability conditions on the category of coherent sheaves which generalise Gieseker stability. We refer to them as \textit{adapted to coherent sheaves} and they admit Harder--Narasimhan and Jordan--Hölder filtrations. We also study the particular case of Bayer's polynomial Bridgeland stability conditions, and their relation to the gauge theoretical counterpart introduced by Dervan--McCarthy--Sektnan.
\end{abstract}

\section{Introduction}

\paragraph{Background.} In \cite{Bayer}, Bayer introduced a generalisation of Bridgeland stability conditions known as \textit{polynomial Bridgeland stability conditions}. Let $(X,[\omega])$ be a smooth projective variety of dimension $n$. Bayer's idea is to consider a central charge $Z$ \textit{i.e.} a groups morphism
$$
Z : K(X) \longrightarrow \C
$$
of the following form
$$
Z(\cE) = \left(\left(\sum_{i = 0}^n \rho_i[\omega]^i\right) \cup \left(\sum_{j = 0}^n U_j\right) \cup \ch(\cE)\right)^{(n,n)}.
$$
Here, the $\rho_i$ are complex numbers satisfying for all $i$, $\Im(\overline{\rho_i}\rho_{i - 1}) > 0$. The $U_j$ are complex cohomology classes of bidegree $(j,j)$, $U_j \in H^{j,j}(X,\C)$ with $U_0 = 1$. The superscript $(n,n)$ means that we only keep the bidegree $(n,n)$ part. We identify $H^{n,n}(X,\C)$ and $\C$ via integration.

Then, Bayer studies the \textit{large volume regime}, \textit{i.e.} what happens when the ample class $[\omega]$ is replaced by $k[\omega]$ for a large integer $k$.
$$
Z_k(\cE) = \left(\left(\sum_{i = 0}^n \rho_ik^i[\omega]^i\right) \cup \left(\sum_{j = 0}^n U_j\right) \cup \ch(\cE)\right)^{(n,n)}.
$$
This defines a groups morphism
$$
K(X) \longrightarrow \C[k].
$$

Then, there is a full abelian sub-category $\cA \subset D^b(X)$ of the derived category of $X$ such that for every object $\cE \in \cA$, $Z_k(\cE)$ lies in the upper half plane when $k$ is large enough.
\begin{definition}[{\cite[Proposition 1.2.1]{Bayer},\cite[Definition 2.7]{DMS}}]\label{DEF:Stabilité Bayer}
    We say that $\cE$ is asymptotically\\ $Z$-(semi-)stable if for all non-zero sub-object $\cF \in \cA$ of $\cE$,
    $$
    \Im(\overline{Z_k(\cE)}Z_k(\cF)) < (\leq)\, 0.
    $$
    Equivalently, the argument of $Z_k(\cE)$ in $]0,\pi[$ is eventually larger than the argument of $Z_k(\cF)$ in $]0,\pi[$.
\end{definition}
Bayer shows that this stability notion is well-behaved in the sense that there are Harder--Narasimhan filtrations for example. It generalises the Bridgeland stability conditions. See \cite[Theorem 3.2.2]{Bayer} for more details about the construction of $\cA$ and the properties of this stability condition. In this note, we shall focus on the case where $\cA = \Coh(X)$ is the category of coherent sheaves on $X$.

On the analytic side, Dervan--McCarthy--Sektnan showed a Kobayashi--Hitchin like correspondence which establishes a link between asymptotic $Z$-stability and the existence of special metrics \cite{DMS}. It generalises a result of Leung about Gieseker stability \cite{Leung}.

Let $\cE = (E,\dbar)$ be a holomorphic vector bundle on $X$. When $h$ is a Hermitian metric on $\cE$, we define its reduced curvature form
$$
\hat{F}(h) = -\frac{1}{2\i\pi}\nabla \circ \nabla,
$$
where $\nabla$ is the Chern connection associated with $\dbar$ and $h$. Consider the following operator associated to the polynomial central charge $Z$.
$$
\cZ_k(\cE,h) = \left(\left(\sum_{i = 0}^n \rho_ik^i\omega^i\right) \wedge \left(\sum_{j = 1}^n u_j\right) \wedge \e^{\hat{F}(h)}\right)^{(n,n)}.
$$
Here, the only restriction on $\rho$ is that $\Im(\overline{\rho_n}\rho_{n - 1}) > 0$ and $\Im(\rho_n) > 0$ (this second hypothesis is simply a normalisation condition and it can be dropped). The classes $U_j$ are assumed to be real (this assumption is natural in this context, though not necessary, see \cite[Remark 2.9]{DMS}). The $u_j$ are real representatives of the classes $U_j$. By Chern--Weil theory, $\tr(\cZ_k(\cE,h))$ is closed and represents $Z_k(\cE)$. Once more, we are interested in the large volume limit \textit{i.e.} when $k \rightarrow +\infty$.

\begin{definition}[{\cite[Definition 2.22]{DMS}}]
    A collection of Hermitian metrics $(h_k)_{k \geq k_0}$ is a solution to the asymptotic $Z$-critical equation if this family is bounded as tensors for the $\mathcal{C}^2$ norm and for all $k$,
    $$
    \Im(\overline{Z_k(\cE)}\cZ_k(\cE,h_k)) = 0.
    $$
\end{definition}

Dervan--McCarthy--Sektnan showed a Kobayashi--Hitchin like correspondence.

\begin{theorem}[{\cite[Theorem 1.1]{DMS}}]\label{THE:DMS}
    Let $\cE$ be a simple holomorphic vector bundle. If $\cE$ is slope semi-stable (with respect to $[\omega]$) and sufficiently smooth (\textit{i.e.} its graded object can be chosen to be locally free), then we have equivalence between
    \begin{enumerate}
        \item $\cE$ admits a solution to the asymptotic $Z$-critical equation.
        \item $\cE$ is not destabilised by its sub-bundles in the sense of Definition \ref{DEF:Stabilité Bayer}.
    \end{enumerate}
\end{theorem}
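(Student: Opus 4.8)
The plan is to prove the correspondence perturbatively in the large volume parameter $k$, treating the asymptotic $Z$-critical equation as a deformation of the Hermite--Einstein equation. Expanding $\Im(\overline{Z_k(\cE)}\cZ_k(\cE,h))$ as a polynomial in $k$, the top-order term vanishes identically (it is proportional to $\omega^n \cdot \id$), and the next-to-leading term is precisely the Hermite--Einstein operator applied to $h$; the classes $U_j$ and the lower coefficients $\rho_i$ enter only at subleading orders. So the rough picture is that a $Z$-critical metric is a Hermite--Einstein metric corrected by an infinite hierarchy of lower-order terms, and the obstruction to building these corrections is exactly what the inequalities of Definition \ref{DEF:Stabilité Bayer} measure.

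First I would fix a reference metric. Since $\cE$ is slope semistable and sufficiently smooth, I choose a Jordan--Hölder filtration whose graded object $\mathrm{Gr}(\cE) = \bigoplus_\alpha \cG_\alpha$ is locally free, each $\cG_\alpha$ being slope stable of the common slope $\mu(\cE)$. By the Donaldson--Uhlenbeck--Yau theorem each $\cG_\alpha$ carries a Hermite--Einstein metric, and these assemble into a background metric on $\mathrm{Gr}(\cE)$ solving the equation to leading order. The difficulty already visible here is that $\mathrm{Gr}(\cE)$ is not simple: its space of holomorphic endomorphisms $H^0(\End\,\mathrm{Gr}(\cE))$ is a nontrivial matrix algebra, and this is exactly the cokernel of the leading-order linearised operator.

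The heart of the argument is then an order-by-order construction of an approximate solution $h_k \sim h_0(\id + k^{-1}\eta_1 + k^{-2}\eta_2 + \cdots)$. At each step one solves $\Delta \eta_j = \Phi_j$, where $\Delta$ is the Hermite--Einstein Laplacian on self-adjoint endomorphisms and $\Phi_j$ is built from the data already fixed at lower orders together with the $Z$-coefficients and the second fundamental forms of the filtration. This is solvable precisely when $\Phi_j$ is $L^2$-orthogonal to $\ker\Delta = H^0(\End\,\mathrm{Gr}(\cE))$. Pairing $\Phi_j$ against the idempotent and off-diagonal endomorphisms produces a finite system of numerical conditions; integrating by parts, these coincide with the leading asymptotics of $\Im(\overline{Z_k(\cE)}Z_k(\cF))$ for the subobjects $\cF$ occurring in the filtration. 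Thus each obstruction vanishes if and only if $\cE$ is not destabilised, and the free constants (the matrix directions in $\ker\Delta$) are pinned down recursively by demanding solvability at the next order --- a finite-dimensional moment-map reduction whose positivity is governed by the strict stability inequalities, and where the simplicity of $\cE$ ensures the surviving trace direction is handled by the topological normalisation.

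Finally I would upgrade the formal solution to an honest one. Truncating at high order yields a metric solving the equation up to an error $O(k^{-N})$, and a quantitative inverse function theorem perturbs it to an exact solution, provided the linearised operator is invertible with inverse norm controlled by a fixed negative power of $k$. Establishing this uniform invertibility is the main obstacle: because $\cE$ is only semistable, the linearisation degenerates as $k \to \infty$, its smallest eigenvalues tending to zero, and one must show that the subleading terms coming from the $Z$-data and the non-split extension structure push these eigenvalues back up at a controlled rate. Quantifying this lower bound and tracing its dependence on the stability numbers is the delicate analytic step; granting it, the implicit function theorem closes the hard direction. The converse direction is obtained by the reverse computation: given a solution, integrating the equation against a destabilising subbundle and reading off the sign of the resulting Chern--Weil pairing yields the required inequality $\Im(\overline{Z_k(\cE)}Z_k(\cF)) \leq 0$.
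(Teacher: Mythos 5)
This statement is \cite[Theorem 1.1]{DMS}, quoted in the introduction as background; the paper contains no proof of it, so there is no internal argument to compare yours against. Judged on its own terms, your outline does match the architecture of the proof in \cite{DMS}: fix a locally free graded object of a Jordan--H\"older filtration, put Hermite--Einstein metrics on its stable summands via Donaldson--Uhlenbeck--Yau, build an approximate solution order by order in $k^{-1}$ with obstructions living in $H^0(\End\,\mathrm{Gr}(\cE))$, identify those obstructions with the asymptotics of $\Im(\overline{Z_k(\cE)}Z_k(\cF))$, and close with a quantitative implicit function theorem. But as a proof it has two genuine gaps.

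First, your obstruction analysis only produces inequalities for the subsheaves occurring in the chosen filtration, whereas condition (2) quantifies over \emph{all} subbundles of $\cE$. Bridging this requires showing that in the large volume limit the only subbundles that can destabilise are those of the same slope as $\cE$ (the leading term of $\Im(\overline{Z_k(\cE)}Z_k(\cF))$ is a positive multiple of $\mu(\cF)-\mu(\cE)$, so slope-destabilisers are excluded by semistability, but the slope-equal ones need not sit inside your particular filtration), and then relating the finitely many vanishing conditions you extract to the full family of inequalities. This identification is a substantive step in \cite{DMS}, not a bookkeeping remark. Second, and more seriously, the uniform invertibility of the linearised operator is exactly the point where the theorem is hard: because $\cE$ is only semistable, the linearisation at the approximate solution has eigenvalues decaying in $k$ along the directions of $H^0(\End\,\mathrm{Gr}(\cE))$, and one must prove a quantitative lower bound on these eigenvalues (using simplicity of $\cE$ and the non-splitting of the extensions) compatible with the error of the approximate solution. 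You explicitly ``grant'' this estimate; granting it is assuming the crux of the analysis. The converse direction is also stated too briefly to check --- the sign of the second fundamental form contribution and the role of the $\mathcal{C}^2$ bound on $(h_k)$ in passing to the limit both need to be controlled. So the proposal is a correct road map of the cited proof, not a proof.
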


The assumption that $\cE$ is slope semi-stable is very natural because the leading order term of the polynomial $\Im(\overline{Z_k(\cE)}Z_k(\cF))$ is a positive multiple of the difference between the slope of $\cF$ and the slope of $\cE$. The assumption that $\cE$ is sufficiently smooth is purely technical and is useful for the analysis.

\paragraph{Motivations.} It naturally raises the question of asymptotic stability with respect to sub-sheaves (like for the Kobayashi--Hitchin correspondence). In particular, can a bundle that admits asymptotically $Z$-critical metrics be destabilised by a sub-sheaf with a non locally free quotient ?

Example \ref{EX:Contre-exemple conjecture DMS} below shows that the structure sheaf of a manifold of dimension greater than or equal to $3$ is asymptotically destabilised by some ideal sub-sheaves when we consider the central charge of the deformed Hermitian Yang--Mills equation. Yet, its flat metric satisfies the deformed Hermitian Yang--Mills equation for any $k$. It gives a counter-example to \cite[Conjecture 1.6]{DMS}. One of the main motivations of this paper is to find which additional assumptions are required to avoid this kind of counter-example toward a generalisation of Theorem \ref{THE:DMS}.

We construct very general stability conditions which avoid such pathological behaviour where a sheaf $\cE$ of dimension $d$ is destabilised by a sub-sheaf $\cF$ whose quotient $\cE/\cF$ has a dimension smaller than $d$. We call these stability conditions \textit{adapted to coherent sheaves of dimension $d$}. We show under the assumption of being adapted the existence of a maximal destabilising sub-sheaf hence the existence of a Harder--Narasimhan filtration. In particular, by uniqueness of this filtration, it is expected to be a filtration by equivariant sub-sheaves when we work with an equivariant sheaf on a toric variety. Hopefully, we expect that an equivariant sheaf is stable if and only if it is stable with respect to equivariant sub-sheaves, which is not true for general stability conditions.

As in the case of Gieseker stability, it is natural to expect that these stability conditions lead to the construction of a moduli spaces of semi-stable sheaves, and thus become a tool to classify sheaves on a given manifold.

\paragraph{Organisation of the paper and main results.} In Section \ref{SEC:Stabilité générale}, we define stability conditions adapted to coherent sheaves of dimension $d$ (Definition \ref{DEF:Adapté}). We then state classical results about these stability conditions such as the existence and uniqueness of Harder--Narasimhan filtrations (Theorem \ref{THE:FHN}) and the existence of Jordan--Hölder filtrations and the uniqueness of the graded object (Theorem \ref{THE:FJH}).

In Section \ref{SEC:Charges centrales polynomiales}, we focus on the particular case of stability conditions coming from a central charge in the large volume limit. We show that being adapted to sheaves of dimension $d$ only depends on the stability vector $\rho$ (Definition \ref{DEF:rho adapté} and Theorem \ref{THE:Caracterisation stabilité}).

We also study the case of the Gieseker stability. We show that it arises from a central charge in the large volume limit and we even construct a central charge $Z$ and an associated operator $\cZ$ such that the asymptotic $Z$-critical equation is Leung's almost Hermitian Einstein equation. This shows that Leung's correspondence \cite{Leung} is a particular case of the Dervan--McCarthy--Sektnan correspondence \cite{DMS} (Example \ref{EX:Leung}).

\paragraph{Acknowledgment.} I thank my PhD advisor, Carl Tipler for his support and feedback and for providing me numerous useful ideas and references. I also thank Ruadha\'\isanspoint\ Dervan, Carlo Scarpa and Lars Martin Sektnan for stimulating conversations about $Z$-critical equations.

\section{General $\mu$-stability}\label{SEC:Stabilité générale}

\subsection{Definitions and basic properties}\label{SEC:Définitions stabilité généralisée}

We study here stability with respect to a large class of stability conditions on the category of coherent sheaves on a smooth projective variety. In all this note, sheaves are assumed to be coherent.

Let $(X,[\omega])$ be a smooth projective variety. Let $(V,\leq)$ be a totally ordered $\Q$-vector space and
$$
\Deg : K(X) \longrightarrow V,
$$
a groups morphism. Such a morphism will play the role of a generalisation of the degree in the context of slope stability and of Hilbert polynomials in the context of Gieseker stability. In the case of Hilbert polynomials, $V = \R[k]$ with $A \leq B$ if $A(k) \leq B(k)$ for $k$ large enough.

When $\cE$ is a non-zero sheaf of dimension $d$ (the dimension of $\cE$ is the dimension of its support), we also introduce
$$
\mu(\cE) = \frac{\Deg(\cE)}{\Rk(\cE)} \in V,
$$
where
$$
\Rk(\cE) = \ch_d(\cE) \cup [\omega]^{n - d} \in \N^*
$$
is the generalised rank of $\cE$. It is always a positive integer. Indeed, let us decompose
$$
\supp(\cE) = \bigcup_{i \in I} V_i \cup W,
$$
where $\dim(W) < d$ and the $V_i$ are the irreducible components of $\supp(\cE)$ of dimension $d$. By assumption, $I \neq \emptyset$ and for all $i \in I$, $\cE_{|V_i}$ has a positive rank. Let $\iota : \supp(\cE) \rightarrow X$ be the inclusion. We have, by the Grotendieck--Riemann--Roch formula,
\begin{align*}
    \ch(\cE) & = \ch(\iota_*\iota^*\cE)\\
    & = \ch(\iota_!\iota^*\cE)\\
    & = \iota_*(\ch(\iota^*\cE)\Td(T_\iota))\\
    & = \iota_*\left(\left(\sum_{i \in I} \rk(\cE_{|V_i})[V_i] + \textrm{ h.d.t.}\right)(1 + \textrm{ h.d.t.})\right)\\
    & = \sum_{i \in I} \rk(\cE_{|V_i})[V_i] + \textrm{ h.d.t.},
\end{align*}
where "h.d.t." stands for "higher degree terms". We deduce that
$$
\Rk(\cE) = \sum_{i \in I} \rk(\cE_{|V_i})\int_{V_i} \omega^d \in \N^*,
$$
because $\omega$ is an integral class.

\begin{definition}\label{DEF:P-stabilité}
    We say that a sheaf $\cE$ is \textit{$\mu$-semi-stable} if for all strict sub-sheaves $0 \subsetneq \cF \subsetneq \cE$,
    \begin{equation}\label{EQ:Stabilité généralisée}
        \mu(\cF) \leq \mu(\cE).
    \end{equation}
    We say that $\cE$ is \textit{$\mu$-stable} if the inequality (\ref{EQ:Stabilité généralisée}) is strong. We say that $\cE$ is \textit{$\mu$-polystable} if it is $\mu$-semi-stable and when (\ref{EQ:Stabilité généralisée}) is an equality, then $\cE = \cF \oplus \cE/\cF$.
\end{definition}

The purpose of this section is to study the case where the $\mu$-stability satisfies the same properties as Gieseker stability (existence and uniqueness of Harder--Narasimhan filtrations, existence of Jordan--Hölder filtrations and uniqueness of the graded object in the semi-stable case, etc.). In the next sub-sections, we study more particularly the case of polynomials given by a central charge in the large volume limit. In this case, we give a simple characterisation of the central charges satisfying this property.

\begin{definition}\label{DEF:Adapté}
    We say that $\mu$ is \textit{adapted to sheaves of dimension $d$} if for all sheaf $\cE$ of pure dimension $d$ and all $\cF \subsetneq \cE$ such that $\cE/\cF$ has dimension less than $d$,
    $$
    \mu(\cF) < \mu(\cE).
    $$
    We say that $\mu$ is \textit{adapted to coherent sheaves} if it is adapted to sheaves of all dimensions. We say that $\mu$ is \textit{adapted to torsion-free sheaves} if it is adapted to sheaves of dimension $n$.
\end{definition}

\begin{example}\label{EX:Exemple mu stabilité adaptée}
    We give here a wide class of examples of stability conditions $\mu$ adapted to coherent sheaves in $V = \R[\epsilon]$ endowed with the order $A \leq B$ if for all $\epsilon > 0$ small enough, $A(\epsilon) \leq B(\epsilon)$. When $\cE$ is a sheaf, we set
    $$
    \ch_{\leq k}(\cE) = \sum_{i = 0}^k \ch_i(\cE).
    $$
    Let $(d_k)_{1 \leq k \leq n}$ be positive integers and $(\Gamma_{k,j})_{1 \leq k \leq n,0 \leq j \leq d_k - 1}$ be a collection of Hodge classes
    $$
    \Gamma_{k,j} \in H^{*,*}(X,\R) = \bigoplus_{i = 0}^n H^{i,i}(X,\C) \cap H^{2i}(X,\R).
    $$
    The stability condition given by
    $$
    \Deg(\cE) = \left(\sum_{k = 1}^n \left(\Gamma_{k,0} \cup \ch_{\leq k}(\cE) + \sum_{j = 1}^{d_k - 1} \Gamma_{k,j} \cup \ch_{\leq k}(\cE)\epsilon^j\right)\epsilon^{d_1 + \cdots + d_{k - 1}}\right)^{(n,n)} \in \R[\epsilon],
    $$
    is adapted to coherent sheaves as long as for all $k$ and all sub-varieties $V$ of $X$ of codimension $k$, $(\Gamma_{0,k} \cup [V])^{(n,n)} > 0$.
\end{example}

\begin{example}
    A particular case of Example \ref{EX:Exemple mu stabilité adaptée} is the stability condition built in \cite{MPT} by Mégy--Pavel--Toma. They consider the map
    $$
    P_\alpha(\cE) = \sum_{i = 0}^n \frac{1}{i!}\left(\ch(\cE) \cup \alpha_i \cup \Td(X)m^i\right)^{(n,n)} \in \R[m].
    $$
    where each $\alpha_i$ is an ample $i$-class. Here, the order on $\R[m]$ is defined with respect to the asymptotic behaviour at infinity. $A \leq B$ if for all $m \geq 0$ large enough, $A(m) \leq B(m)$. We can recover the convention of the asymptotic behaviour at $0^+$ by replacing $m$ by $\epsilon^{-1}$ and by multiplying the expression by a suitable power of $\epsilon$. We could also use the lexicographic order on the coefficients of the polynomials.
\end{example}

\begin{remark}
    If $\cE$ is of pure dimension $d$ and has an irreducible support, then $\Rk(\cE) = \rk(\cE_{|\supp(\cE)})\int_{\supp(\cE)} \omega^d$. In particular, the $\mu$-(semi-)(poly)stability of $\cE$ is independent of the choice of the polarisation $[\omega]$.
    
    If however $\cE$ has a reducible support $\supp(\cE) = V \cup W$, then $\cE = \iota_{V*}\cE_{|V} \oplus \iota_{W*}\cE_{|W}$ where $\iota_V : V \rightarrow X$ and $\iota_W : W \rightarrow X$ are the injections. Thus, $\cE$ can not be $\mu$-stable and its $\mu$-(semi-)(poly)stability may depend of the choice of $[\omega]$.
\end{remark}
\begin{remark}
    Unless $X$ is a curve, the usual slope stability is not adapted to torsion-free sheaves. Indeed, assume that $\Deg(\cE) = \ch_1(\cE) \cup [\omega]^{n - 1}$ is the usual degree with respect to a given polarisation and $\cE$ is torsion-free. For any sub-variety $V \subset X$ of codimension greater than or equal to $2$, set $\cI_V \subset \cO_X$ its ideal sheaf. Then, $\cI_V \otimes \cE \subset \cE$ has the same rank and the same degree as $\cE$.
\end{remark}
\begin{remark}
    The right category to work with for slope stability is $\Coh_{n,n - 1}(X)$, the category of coherent sheaves modulo coherent sheaves of codimension at least $2$. See \cite[Section 1.6]{Huybrechts_Lehn} for reference. In the case where the inequality in Definition \ref{DEF:Adapté} is large instead on strict, we can probably find similarly a good quotient category to work with.
\end{remark}

The following result is the generalisation of a standard result concerning Gieseker stability.
\begin{proposition}
    Assume that $\mu$ is adapted to sheaves of dimension $d$ and let $\cE$ be of pure dimension $d$. Then $\cE$ is $\mu$-(semi-)stable if and only if it is with respect to its saturated sub-sheaves (\textit{i.e.} those whose quotients are of pure dimension $d$).
\end{proposition}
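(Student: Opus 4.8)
The plan is to prove both implications separately. One direction is free: saturated sub-sheaves are in particular sub-sheaves, so $\mu$-(semi-)stability immediately implies $\mu$-(semi-)stability tested only against saturated sub-sheaves. The content lies in the converse, where I fix an arbitrary strict sub-sheaf $0 \subsetneq \cF \subsetneq \cE$ and reduce the inequality $\mu(\cF) \leq \mu(\cE)$ (resp. strict) to the corresponding inequality for a saturated sub-sheaf, using the saturation $\overline{\cF}$ of $\cF$ in $\cE$.

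First I would record that, since $\cE$ is pure of dimension $d$, every non-zero sub-sheaf of $\cE$---in particular $\cF$ and its saturation $\overline{\cF}$---is again pure of dimension $d$, so all the relevant slopes are defined. Writing $Q = \cE/\cF$, I would split into two cases. If $\dim Q < d$, the adaptedness hypothesis applies verbatim and yields $\mu(\cF) < \mu(\cE)$, which already settles both the semi-stable and the stable statements. If $\dim Q = d$, I define $\overline{\cF}$ as the preimage in $\cE$ of the maximal sub-sheaf $T \subseteq Q$ of dimension $< d$; then $\cE/\overline{\cF} \cong Q/T$ is pure of dimension $d$ and non-zero, so $\overline{\cF}$ is a proper (non-zero) saturated sub-sheaf, while $\overline{\cF}/\cF \cong T$ has dimension $< d$.

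The conclusion then comes from chaining two inequalities. Applying adaptedness to the pure sheaf $\overline{\cF}$ and its sub-sheaf $\cF$, whose quotient $\overline{\cF}/\cF$ has dimension $< d$, gives $\mu(\cF) \leq \mu(\overline{\cF})$ with equality exactly when $\cF = \overline{\cF}$; and the standing hypothesis on saturated sub-sheaves gives $\mu(\overline{\cF}) \leq \mu(\cE)$ in the semi-stable case and $\mu(\overline{\cF}) < \mu(\cE)$ in the stable case. Composing the two yields $\mu(\cF) \leq \mu(\cE)$, respectively $\mu(\cF) < \mu(\cE)$, which is what is required.

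I do not anticipate a genuine obstacle: the argument is the classical one for Gieseker stability, with the saturation playing its usual role, and both inequalities above are direct applications of hypotheses already available. The single point that should be made explicit is that passing to the saturation leaves the generalised rank unchanged, namely $\Rk(\overline{\cF}) = \Rk(\cF)$. This holds because $\Rk$ reads off only the degree-$d$ part $\ch_d$ of the Chern character and $\overline{\cF}/\cF$ has dimension $< d$, so it contributes nothing to $\ch_d$; this is exactly what makes the first inequality an instance of the adaptedness hypothesis rather than something needing separate justification.
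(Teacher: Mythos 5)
Your proof is correct and follows essentially the same route as the paper: pass to the saturation $\overline{\cF}$, apply the adaptedness hypothesis to $\cF \subsetneq \overline{\cF}$ (whose quotient has dimension $< d$) to get a strict inequality, and chain it with the hypothesis on the saturated sub-sheaf $\overline{\cF}$. The closing remark about $\Rk(\overline{\cF}) = \Rk(\cF)$ is true but superfluous, since adaptedness is an assumption on $\mu$ that applies directly once $\overline{\cF}$ is known to be pure of dimension $d$ with $\dim(\overline{\cF}/\cF) < d$.
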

\begin{proof}
The "only if" part is trivial. Assume $\cE$ is $\mu$-(semi-)stable with respect to saturated sub-sheaves and consider a sub-sheaf $0 \subsetneq \cF \subsetneq \cE$ which is not saturated. Let $\cF'$ be its saturation in $\cE$ so $\cF \subset \cF' \subset \cE$ and $\dim(\cF'/\cF) < d$.

Since $\mu$ is adapted to sheaves of dimension $d$, $\mu(\cF) < \mu(\cF') \leq \mu(\cE)$. We deduce that $\cE$ is $\mu$-(semi-)stable.
\end{proof}

\begin{corollary}
    If $\mu$ is adapted to torsion-free sheaves, all rank $1$ torsion-free sheaves (in particular, line bundles) are $\mu$-stable.
\end{corollary}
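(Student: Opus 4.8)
The plan is to reduce the $\mu$-stability of a rank $1$ torsion-free sheaf to the defining property of being adapted to torsion-free sheaves, \textit{i.e.}\ adapted to sheaves of dimension $n$. First I would record that a torsion-free sheaf $\cE$ on the smooth (hence integral) variety $X$ is automatically of pure dimension $n$: any nonzero subsheaf of a torsion-free sheaf is again torsion-free, and a nonzero torsion-free sheaf has nonzero stalk at the generic point $\eta$ (otherwise it would be supported on a proper closed subset and thus be torsion), so every nonzero subsheaf has dimension $n$. In particular $\cE$ itself is pure of dimension $n$, so Definition \ref{DEF:Adapté} is applicable to it.

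Next, for an arbitrary strict nonzero subsheaf $0 \subsetneq \cF \subsetneq \cE$, I would show that the quotient $\cE/\cF$ has dimension strictly less than $n$. Since $\cF \subseteq \cE$ is a subsheaf of a torsion-free sheaf, it is torsion-free and nonzero, so by the previous paragraph its generic stalk $\cF_\eta$ is nonzero, \emph{i.e.}\ its generic rank is at least $1$. On the other hand, localising the inclusion at $\eta$ realises $\cF_\eta$ as a $K(X)$-subspace of $\cE_\eta$, which is one-dimensional because $\cE$ has rank $1$; hence $\cF$ has generic rank exactly $1$. Consequently $(\cE/\cF)_\eta = 0$, so $\cE/\cF$ is supported on a proper closed subset and $\dim(\cE/\cF) < n = \dim \cE$.

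Finally, I would invoke adaptedness directly: as $\mu$ is adapted to torsion-free sheaves, $\cE$ is of pure dimension $n$, and $\cF \subsetneq \cE$ has quotient of dimension $< n$, Definition \ref{DEF:Adapté} yields $\mu(\cF) < \mu(\cE)$. Since $\cF$ was an arbitrary nonzero strict subsheaf, $\cE$ is $\mu$-stable; line bundles are covered because they are locally free of rank $1$, in particular rank $1$ and torsion-free. I do not anticipate a serious obstacle here: the only point demanding care is the rank bookkeeping, namely that a nonzero subsheaf of a rank $1$ torsion-free sheaf again has generic rank $1$ and thus a lower-dimensional quotient. This is standard but genuinely uses torsion-freeness and the hypothesis $\rk = 1$; it fails for higher rank and the usual slope, exactly as illustrated by the remark on $\cI_V \otimes \cE \subset \cE$ above.
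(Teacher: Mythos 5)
Your proof is correct and is essentially the argument the paper intends: the corollary is stated as a consequence of the preceding proposition, whose content is exactly your key observation that every nonzero strict subsheaf $\cF$ of a rank $1$ torsion-free sheaf has full generic rank, hence torsion quotient, so that adaptedness (Definition \ref{DEF:Adapté}) forces $\mu(\cF) < \mu(\cE)$. You merely unpack the proposition (equivalently: there are no proper nonzero saturated subsheaves, so stability is vacuous) rather than citing it, and the rank bookkeeping you supply is exactly the point being used.
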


\subsection{Harder--Narasimhan and Jordan--Hölder filtrations}

For Gieseker stability, we can construct Harder--Narasimhan (HN) filtrations and Jordan--Hölder (JH) filtrations of semi-stable sheaves. If $\mu$ is adapted to sheaves of dimension $d$, we can build HN filtrations of sheaves of pure dimension $d$ and JH filtrations of $\mu$-semi-stable sheaves of pure dimension $d$ similarly. The proofs are the exact same as in \cite[Sections 1.3 and 1.5]{Huybrechts_Lehn} so we don't write them again. We simply state here the analogue of the most important results that leads to the construction of these filtrations.

\begin{lemma}[{\cite[Lemma 1.3.5]{Huybrechts_Lehn}}]\label{LEM:Faisceau déstabilisant max}
    Assume that $\mu$ is adapted to sheaves of dimension $d$ and let $\cE$ be of pure dimension $d$. Then, there is a unique sub-sheaf $0 \subsetneq \cF \subset \cE$ of $\cE$ such that for all sub-sheaves $\cG$ of $\cE$,
    $$
    \mu(\cG) \leq \mu(\cF)
    $$
    and in case of equality, $\cG \subset \cF$. In particular, $\cF$ is $\mu$-semi-stable.
\end{lemma}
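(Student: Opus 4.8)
The plan is to establish the existence of a \emph{maximal destabilizing subsheaf} by mimicking the standard Gieseker-stability argument from \cite[Lemma 1.3.5]{Huybrechts_Lehn}, relying on the hypothesis that $\mu$ is adapted to sheaves of dimension $d$ to control quotients of smaller dimension. The key structural fact needed is that among all subsheaves of $\cE$, the set of achievable slopes $\mu(\cG)$ is bounded above and the maximal slope is attained. First I would show boundedness: since $\cE$ has pure dimension $d$, any subsheaf $\cG \subsetneq \cE$ with $\mu(\cG)$ comparable to the maximum may be replaced by its saturation $\cG'$ without decreasing the slope (because $\mu$ is adapted to dimension $d$, one has $\mu(\cG) \leq \mu(\cG')$ as in the preceding proposition). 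So it suffices to bound slopes over \emph{saturated} subsheaves, whose quotients $\cE/\cG'$ are pure of dimension $d$; here one invokes the boundedness of the family of saturated subsheaves (a Grothendieck-type boundedness result), which guarantees that $\sup_{\cG} \mu(\cG)$ exists in the totally ordered space $V$ and is realized by some subsheaf.

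Next I would prove \emph{existence} of a subsheaf attaining this supremum and having the stated maximality property. Let $\cF$ be a subsheaf whose slope equals the supremum $\mu_{\max}$ and which is maximal with respect to inclusion among all such maximal-slope subsheaves. The essential lemma is the standard \emph{seesaw}-type inequality: for two subsheaves $\cF_1, \cF_2$ of $\cE$, the slopes of the sum $\cF_1 + \cF_2$ and the intersection $\cF_1 \cap \cF_2$ interlace those of $\cF_1$ and $\cF_2$. Concretely, from the exact sequence
\[
0 \longrightarrow \cF_1 \cap \cF_2 \longrightarrow \cF_1 \oplus \cF_2 \longrightarrow \cF_1 + \cF_2 \longrightarrow 0,
\]
together with additivity of $\Deg$ and $\Rk$ on short exact sequences, one derives that if $\mu(\cF_1), \mu(\cF_2) \geq \mu_{\max}$ then $\mu(\cF_1 + \cF_2) \geq \mu_{\max}$ as well. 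Applying this to a maximal-slope $\cF$ and any competitor $\cG$ with $\mu(\cG) = \mu_{\max}$ forces $\cF + \cG$ to have slope $\mu_{\max}$, and by the chosen maximality of $\cF$ under inclusion we get $\cG \subset \cF$, which yields the required statement that equality forces $\cG \subset \cF$.

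For \emph{uniqueness}, suppose $\cF$ and $\cF'$ both satisfy the conclusion. Then $\mu(\cF') \leq \mu(\cF)$ with equality, so $\cF' \subset \cF$ by the property of $\cF$; symmetrically $\cF \subset \cF'$, giving $\cF = \cF'$. Finally, the claim that $\cF$ is $\mu$-semi-stable is immediate: every subsheaf $\cG \subset \cF$ is in particular a subsheaf of $\cE$, hence $\mu(\cG) \leq \mu_{\max} = \mu(\cF)$, which is exactly semi-stability of $\cF$.

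The main obstacle I anticipate is the boundedness step: making precise that the supremum $\mu_{\max}$ exists in $V$ and is attained. In the Gieseker setting this rests on Grothendieck's boundedness theorem for the family of saturated (purely $d$-dimensional quotient) subsheaves of a fixed sheaf, and one must check this passes through the general $\Deg$ and $\Rk$ used here. The adaptedness hypothesis is precisely what lets one restrict attention to saturated subsheaves, so the delicate point is verifying that $V$-valued slopes behave well enough (the order being total on a $\Q$-vector space) for the supremum to be meaningful and realized; the seesaw inequality and the additivity of $\Deg$ are then routine given the earlier setup.
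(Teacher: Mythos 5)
Your seesaw computation, the reduction to saturated subsheaves via adaptedness, and the uniqueness and semistability arguments are all fine. The genuine gap is the step you yourself flag as the main obstacle: the existence \emph{and attainment} of $\mu_{\max}=\sup_{\cG}\mu(\cG)$ in $V$. In the generality of this paper, $V$ is an arbitrary totally ordered $\Q$-vector space and $\Deg$ an arbitrary adapted group homomorphism $K(X)\to V$, so two things break. First, a bounded-above subset of such a $V$ need not have a supremum, and a supremum need not be attained (already in $\R^2$ or $\R[\epsilon]$ with the lexicographic-type orders the paper actually uses: $\{(0,m):m\in\N\}$ is bounded above by $(1,0)$ with no least upper bound, and $\{(1-1/m,0)\}$ has an unattained supremum). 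Second, Grothendieck's boundedness lemma is a statement about the classical slope $\hat\mu$ with respect to an ample class; it does not directly bound the family of saturated subsheaves with $\mu(\cG)$ large for a general adapted $\Deg$, and even granted boundedness one would still have to argue that $\Deg$ takes only finitely many values on a bounded family. None of this is supplied, and without it the element $\cF$ you build on top of $\mu_{\max}$ does not exist.

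The proof the paper intends (it defers verbatim to Huybrechts--Lehn, Lemma 1.3.5) sidesteps the supremum entirely and is purely order-theoretic. One puts the partial order $\cF_1\preceq\cF_2$ iff $\cF_1\subset\cF_2$ and $\mu(\cF_1)\leq\mu(\cF_2)$ on nonzero subsheaves; ascending chains stabilise because $\cE$ is noetherian, so Zorn's lemma gives $\preceq$-maximal elements. Among these one chooses $\cF$ with $\Rk(\cF)$ minimal --- this is where the paper's earlier observation that the generalised rank is a \emph{positive integer} is used, since it guarantees the minimum exists --- and then derives the conclusion by contradiction, using exactly the sum/intersection seesaw on $0\to\cF\cap\cG\to\cF\oplus\cG\to\cF+\cG\to 0$ that you wrote down, applied to a violating $\cG$ of minimal generalised rank. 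If you replace your ``attained supremum plus inclusion-maximal representative'' with this Zorn-plus-minimal-rank selection, the rest of your argument goes through and no boundedness theorem is needed.
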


\begin{lemma}[{\cite[Proposition 1.2.7]{Huybrechts_Lehn}}]
    If $\cE$ and $\cF$ are $\mu$-semi-stable sheaves of pure dimension $d$ and $\mu(\cE) > \mu(\cF)$, then $\Hom(\cE,\cF) = 0$.
    
    If $\cE$ is $\mu$-stable and $\cF$ is $\mu$-semi-stable both of pure dimension $d$ and $\mu(\cE) = \mu(\cF)$, then any non-zero morphism $\cE \rightarrow \cF$ is an isomorphism.
\end{lemma}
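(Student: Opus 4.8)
The plan is to run the classical image–kernel argument, the only inputs being the additivity of $\Deg$ and $\Rk$ on short exact sequences (both factor through $K(X)$), the positivity of $\Rk$ on sheaves of dimension $d$, the fact that a nonzero subsheaf of a pure sheaf of dimension $d$ is again pure of dimension $d$, and the adaptedness hypothesis of Definition \ref{DEF:Adapté}. First I would record the see-saw principle: in any short exact sequence of sheaves of pure dimension $d$, the slope $\mu$ of the middle term lies weakly between the slopes of the sub and the quotient, and the comparison is strict (resp. an equality) on one side exactly when it is on the other. This is immediate from additivity, since $\mu(\text{middle})-\mu(\text{sub})$ and $\mu(\text{quotient})-\mu(\text{middle})$ share the numerator $\Rk(\text{sub})\Deg(\text{quotient})-\Rk(\text{quotient})\Deg(\text{sub})$ and have positive denominators.

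For the first assertion, let $\phi\colon\cE\to\cF$ be nonzero and set $\cG=\im\phi$. Since $\cG$ is a nonzero subsheaf of the pure sheaf $\cF$, it is pure of dimension $d$, so $\Rk(\cG)>0$; likewise $\ker\phi$ is either $0$ or pure of dimension $d$. Applying the semistability of $\cE$ to $\ker\phi\subset\cE$ and the see-saw to $0\to\ker\phi\to\cE\to\cG\to0$ gives $\mu(\cE)\leq\mu(\cG)$ (with equality forced when $\ker\phi=0$), while the semistability of $\cF$ applied to $\cG\subset\cF$ gives $\mu(\cG)\leq\mu(\cF)$. Hence $\mu(\cE)\leq\mu(\cF)$, contradicting $\mu(\cE)>\mu(\cF)$; so $\phi=0$.

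For the second assertion I would separate injectivity and surjectivity. Injectivity is the refinement of the above using that $\cE$ is $\mu$-stable: if $\ker\phi\neq0$ then $0\subsetneq\ker\phi\subsetneq\cE$, so stability gives $\mu(\ker\phi)<\mu(\cE)$, the see-saw upgrades this to $\mu(\cG)>\mu(\cE)=\mu(\cF)$, contradicting $\mu(\cG)\leq\mu(\cF)$ from the semistability of $\cF$. Thus $\phi$ is injective and $\cG\cong\cE$ is $\mu$-stable with $\mu(\cG)=\mu(\cF)$. For surjectivity I examine $\cQ=\coker\phi=\cF/\cG$. If $\cQ$ had dimension $<d$ then $\cG\subsetneq\cF$ would be a proper subsheaf with lower-dimensional quotient, and adaptedness to sheaves of dimension $d$ would force $\mu(\cG)<\mu(\cF)$, contradicting $\mu(\cG)=\mu(\cF)$. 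Hence either $\cQ=0$, giving the isomorphism, or $\cQ$ is pure of dimension $d$ and the see-saw yields $\mu(\cQ)=\mu(\cF)$.

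The main obstacle is precisely this last, equal-dimensional case: I must rule out a proper image $\cG\subsetneq\cF$ of the same slope and dimension. This is where the stability hypothesis is forced to do genuine work, and it is the step I expect to be delicate. The approach I would take is to exploit that $\cG\cong\cE$ is $\mu$-\emph{stable} of slope $\mu(\cF)$ sitting inside the $\mu$-semistable $\cF$: the complementary quotient $\cQ$ is itself $\mu$-semistable of the same slope, and one analyses the interaction of the stable subobject $\cG$ with this semistable quotient via the see-saw applied to all intermediate subsheaves, reducing the problem to the indecomposable pieces of $\cF$ of slope $\mu(\cF)$ and showing that the stability of $\cG$ prevents $\cQ$ from being nonzero. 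Carrying out this reduction cleanly is the heart of the matter; once $\cQ=0$ is established, $\phi$ is simultaneously injective and surjective, hence an isomorphism.
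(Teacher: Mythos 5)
Your first assertion and your injectivity step are exactly the classical argument the paper relies on (the paper does not reprove this lemma; it states explicitly that the proofs are taken verbatim from Huybrechts--Lehn, here Proposition 1.2.7): the see-saw principle applied to $0 \to \ker\phi \to \cE \to \im\phi \to 0$, purity of subsheaves of pure sheaves, and semistability played off on both sides. Your use of adaptedness to force the image to be saturated in $\cF$ is also the correct substitute, in this generalised setting, for the multiplicity/purity bookkeeping of the Hilbert-polynomial case. Up to that point the proposal is sound.

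The step you flag as ``the heart of the matter'' --- ruling out a proper saturated image $\cG \subsetneq \cF$ with $\mu(\cG) = \mu(\cF)$ and pure quotient --- is a genuine gap, and no refinement of your see-saw/indecomposable-pieces strategy can close it, because the claim is false in that generality: take $\cF = \cE \oplus \cE$, which is $\mu$-semistable of pure dimension $d$ with $\mu(\cF) = \mu(\cE)$; the inclusion of the first summand is a nonzero morphism that is not an isomorphism. What Huybrechts--Lehn, Proposition 1.2.7 (the result the paper cites) actually proves is asymmetric: a nonzero $\phi : \cE \to \cF$ between semistable pure sheaves of equal slope is \emph{injective} when the source $\cE$ is stable and \emph{surjective} when the target $\cF$ is stable; the isomorphism conclusion (their Corollary 1.2.8) requires \emph{both} to be stable. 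Note that when $\cF$ is stable, surjectivity is immediate and needs none of your proposed machinery: $\cG = \im\phi \subsetneq \cF$ would give $\mu(\cG) < \mu(\cF)$ by stability of $\cF$, contradicting $\mu(\cG) = \mu(\cE) = \mu(\cF)$. So the honest conclusion of your (correct) partial argument is injectivity only; the remaining claim is an overstatement in the paper's formulation relative to the cited source, not a missing idea on your side.
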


\begin{theorem}[{\cite[Theorem 1.3.4]{Huybrechts_Lehn}}]\label{THE:FHN}
    Assume that $\mu$ is adapted to sheaves of dimension $d$ and let $\cE$ be a sheaf of pure dimension $d$. There is a unique filtration
    $$
    0 = \cE_0 \subsetneq \cE_1 \subsetneq \cdots \subsetneq \cE_l = \cE
    $$
    such that for all $i$, $\cG_i = \cE_i/\cE_{i - 1}$ has pure dimension $d$ and is $\mu$-semi-stable. Moreover, we have,
    $$
    \mu(\cG_1) > \cdots > \mu(\cG_l).
    $$
    We call this filtration the \textit{Harder--Narasimhan filtration} of $\cE$. $\cE_1$ is the sheaf given by Lemma \ref{LEM:Faisceau déstabilisant max}.
\end{theorem}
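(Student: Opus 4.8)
The plan is to prove existence and uniqueness simultaneously by induction on the generalised rank $\Rk(\cE) \in \N^*$, with Lemma \ref{LEM:Faisceau déstabilisant max} as the engine. Write $\cE_1 \subset \cE$ for the maximal destabilising sub-sheaf it provides; recall $\cE_1$ is $\mu$-semi-stable, that $\mu(\cG) \leq \mu(\cE_1)$ for every sub-sheaf $\cG \subset \cE$, and that equality forces $\cG \subset \cE_1$. The filtration will be built by setting $\cE_1$ as the first step and recursing on the quotient.

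For \emph{existence}, the first point is that $\cE/\cE_1$ is again of pure dimension $d$. Suppose not, and let $\cE_1'$ be the pre-image in $\cE$ of the maximal sub-sheaf of dimension $< d$ of $\cE/\cE_1$; then $\cE_1 \subsetneq \cE_1'$, the sub-sheaf $\cE_1' \subset \cE$ is pure of dimension $d$ (a sub-sheaf of a pure sheaf is pure), and $\cE_1'/\cE_1$ has dimension $< d$. Since $\mu$ is adapted to sheaves of dimension $d$, this gives $\mu(\cE_1) < \mu(\cE_1')$, contradicting the maximality of $\mu(\cE_1)$. If $\cE_1 = \cE$ then $\cE$ is $\mu$-semi-stable and the filtration is trivial; otherwise $\Rk(\cE/\cE_1) = \Rk(\cE) - \Rk(\cE_1) < \Rk(\cE)$, so by induction $\cE/\cE_1$ carries a Harder--Narasimhan filtration, whose pre-image in $\cE$ (with $\cE_1$ as bottom step) furnishes a filtration with $\mu$-semi-stable pure quotients $\cG_i$. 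To check $\mu(\cG_1) > \mu(\cG_2)$, I would use that $\Deg$ and $\Rk$ are additive on short exact sequences of pure-dimension-$d$ sheaves (both built from the additive Chern character), so that for $\cE_1 \subset \cE_2$ with $\cG_2 = \cE_2/\cE_1$ pure the slope $\mu(\cE_2) = \tfrac{\Rk(\cE_1)\mu(\cE_1) + \Rk(\cG_2)\mu(\cG_2)}{\Rk(\cE_2)}$ is a convex combination of $\mu(\cE_1)$ and $\mu(\cG_2)$ with positive rational weights. In the totally ordered $\Q$-vector space $V$ such a combination lies strictly between its two entries, so $\mu(\cE_2) < \mu(\cE_1)$ — which holds since $\cE_1 \subsetneq \cE_2$ and $\cE_1$ is maximal destabilising — is equivalent to $\mu(\cG_2) < \mu(\cE_1) = \mu(\cG_1)$; the remaining strict inequalities come from the inductive Harder--Narasimhan structure on $\cE/\cE_1$.

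For \emph{uniqueness}, I would first upgrade the Hom-vanishing lemma stated above to filtered targets: if $\cA$ is $\mu$-semi-stable of pure dimension $d$ and $\cF$ is pure of dimension $d$ with all its Harder--Narasimhan slopes strictly below $\mu(\cA)$, then $\Hom(\cA,\cF) = 0$, proved by induction on the length of the Harder--Narasimhan filtration of $\cF$ together with left-exactness of $\Hom(\cA,-)$. Now let $0 \subset \cF_1 \subset \cdots \subset \cF_m = \cE$ be any Harder--Narasimhan filtration. Its bottom term $\cF_1$ is $\mu$-semi-stable with $\mu(\cF_1) \leq \mu(\cE_1)$, while $\cE/\cF_1$ is pure of dimension $d$ with top Harder--Narasimhan slope $\mu(\cF_2/\cF_1) < \mu(\cF_1) \leq \mu(\cE_1)$. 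Applying the filtered Hom-vanishing to $\cA = \cE_1$ and $\cF = \cE/\cF_1$ shows the composite $\cE_1 \hookrightarrow \cE \twoheadrightarrow \cE/\cF_1$ vanishes, i.e.\ $\cE_1 \subset \cF_1$. Semi-stability of $\cF_1$ then gives $\mu(\cE_1) \leq \mu(\cF_1)$, hence $\mu(\cE_1) = \mu(\cF_1)$, and the equality clause of Lemma \ref{LEM:Faisceau déstabilisant max} forces $\cF_1 \subset \cE_1$, so $\cF_1 = \cE_1$. Passing to $\cE/\cE_1$ and invoking the induction hypothesis identifies the two filtrations.

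The main obstacle I anticipate is the bookkeeping of purity of dimension $d$ through every sub-object and quotient, so that $\mu$, $\Rk$ and the adapted hypothesis all remain applicable; in particular, the step showing $\cE/\cE_1$ is pure is precisely where the \emph{adapted to dimension $d$} condition does its work, replacing the pure-codimension arguments of the Gieseker setting. Once this purity is secured, the remaining ingredients (additivity of $\Deg$ and $\Rk$, the convex-combination inequality in $V$, and the filtered Hom-vanishing) are formal, which is why the argument proceeds exactly in parallel with \cite[Sections 1.3 and 1.5]{Huybrechts_Lehn}.
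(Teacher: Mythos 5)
Your proposal is correct and follows essentially the same route as the paper, which simply defers to the Harder--Narasimhan construction of Huybrechts--Lehn (Sections 1.3 and 1.5): induction on the generalised rank with the maximal destabilising sub-sheaf of Lemma \ref{LEM:Faisceau déstabilisant max} as the first step, purity of the quotient secured precisely by the \emph{adapted to dimension $d$} hypothesis, and uniqueness via Hom-vanishing between semi-stable sheaves of decreasing slope. The details (additivity of $\Deg$ and $\Rk$ on pure-dimension-$d$ exact sequences, the convex-combination inequality in $V$, the filtered Hom-vanishing) all check out.
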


\begin{theorem}[{\cite[Proposition 1.5.2]{Huybrechts_Lehn}}]\label{THE:FJH}
    Assume that $\mu$ is adapted to sheaves of dimension $d$ and let $\cE$ be a $\mu$-semi-stable sheaf of pure dimension $d$. There is a filtration
    $$
    0 = \cE_0 \subsetneq \cE_1 \subsetneq \cdots \subsetneq \cE_l = \cE
    $$
    such that for all $i$, $\cG_i = \cE_i/\cE_{i - 1}$ has pure dimension $d$, is $\mu$-stable and $\mu(\cG_i) = \mu(\cE)$. Moreover, the sheaf
    $$
    \mathrm{Gr}(\cE) = \bigoplus_{i = 1}^l \cG_i,
    $$
    is $\mu$-polystable and is uniquely determined by $\cE$.
    
    We call such a filtration a \textit{Jordan--Hölder filtration} of $\cE$ and $\mathrm{Gr}(\cE)$ its \textit{graded object}.
\end{theorem}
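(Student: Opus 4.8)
The plan is to follow the classical Jordan--Hölder argument, adapting its two standard ingredients---existence of the filtration and uniqueness of the graded object---to the present setting, and to isolate the uniqueness statement as a formal Jordan--Hölder theorem in a suitable length category.

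\textbf{Existence.} Among all non-zero sub-sheaves $\cF \subseteq \cE$ with $\mu(\cF) = \mu(\cE)$ (this set is non-empty, as it contains $\cE$), I would choose one, call it $\cE_1$, of minimal generalised rank $\Rk$. First, $\cE_1$ is saturated in $\cE$: if $\cE_1'$ denotes its saturation, then $\dim(\cE_1'/\cE_1) < d$, so adaptedness forces $\mu(\cE_1) < \mu(\cE_1') \leq \mu(\cE)$, contradicting $\mu(\cE_1) = \mu(\cE)$; hence $\cE_1 = \cE_1'$ and $\cE/\cE_1$ is pure of dimension $d$. Next, $\cE_1$ is $\mu$-stable: by the proposition on saturated sub-sheaves it suffices to test proper saturated sub-sheaves $\cG \subsetneq \cE_1$, which are then saturated in $\cE$ and so satisfy $\Rk(\cG) < \Rk(\cE_1)$; were $\mu(\cG) = \mu(\cE)$, this would contradict minimality, so $\mu(\cG) < \mu(\cE_1)$. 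Finally, additivity of $\Deg$ and $\Rk$ together with the mediant inequality in the totally ordered $\Q$-vector space $V$ give $\mu(\cE/\cE_1) = \mu(\cE)$ and show that $\cE/\cE_1$ is again $\mu$-semi-stable of pure dimension $d$. Since $\Rk(\cE/\cE_1) = \Rk(\cE) - \Rk(\cE_1) < \Rk(\cE)$, induction on $\Rk$ produces a Jordan--Hölder filtration of $\cE/\cE_1$, which pulls back to the desired filtration of $\cE$.

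\textbf{Uniqueness of the graded object.} Rather than comparing two filtrations by hand, I would argue categorically. Let $\mathcal{C}_\mu$ be the full subcategory of $\Coh(X)$ whose objects are $0$ together with the $\mu$-semi-stable sheaves of pure dimension $d$ and slope $\mu(\cE)$. The key step is that $\mathcal{C}_\mu$ is abelian: for a morphism $f$ between objects, the sub-quotient slope inequalities for semi-stable sheaves---again through the mediant inequality---force $\mu(\im f) = \mu(\cE)$, while purity of dimension $d$ is inherited by sub-sheaves and preserved under the relevant quotients, so that $\ker f$, $\im f$ and $\coker f$ all lie in $\mathcal{C}_\mu$. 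By the Hom-vanishing/isomorphism lemma stated above, the simple objects of $\mathcal{C}_\mu$ are exactly the $\mu$-stable sheaves of slope $\mu(\cE)$. Since $\Rk$ is additive on short exact sequences and takes values in $\N^*$ on non-zero objects, every object has finite length (bounded by its rank), so $\mathcal{C}_\mu$ is a length category. The classical Jordan--Hölder theorem in an abelian length category then gives existence of composition series and uniqueness of the composition factors up to permutation; a Jordan--Hölder filtration in the sense of the statement is precisely a composition series in $\mathcal{C}_\mu$, with factors the $\cG_i$. Hence $\mathrm{Gr}(\cE) = \bigoplus_i \cG_i$ is independent of the chosen filtration up to isomorphism.

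\textbf{Polystability and the main obstacle.} It remains to observe that a finite direct sum of $\mu$-stable sheaves of equal slope $\mu(\cE)$ is $\mu$-polystable: such a sum is $\mu$-semi-stable, and any sub-sheaf $\cF$ with $\mu(\cF) = \mu(\cE)$ has each projection to a stable summand either zero or an isomorphism, which lets one split $\cF$ off as a direct factor. I expect the only genuine work to be the verification that $\mathcal{C}_\mu$ is abelian, namely that kernels, images and cokernels of morphisms of slope-$\mu(\cE)$ semi-stable sheaves remain pure of dimension $d$ and of slope $\mu(\cE)$. This is exactly where adaptedness (controlling saturations and keeping sub-quotients pure) and the mediant inequality in $V$ must be combined carefully; once it is established, everything else reduces to the formal Jordan--Hölder theorem in a length category.
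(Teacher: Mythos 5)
Your proposal is correct and follows essentially the same route as the paper, which does not write out a proof but defers verbatim to Huybrechts--Lehn (Sections 1.3 and 1.5): existence via a minimal-rank sub-sheaf of equal slope (made stable and saturated using adaptedness), and uniqueness of the graded object via the formal Jordan--Hölder theorem in the abelian length category of semi-stable pure $d$-dimensional sheaves of fixed slope. The point you flag as the main obstacle (purity of cokernels, i.e.\ saturation of images, which is exactly where adaptedness enters) is indeed the only place the generalised setting requires care, and your sketch of it is the intended one.
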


\section{The case of polynomial central charges}\label{SEC:Charges centrales polynomiales}

In this section, we study the specific case of $\mu$-stability built from central charges as originally introduced by Bayer \cite{Bayer} and for which Dervan--McCarthy--Sektnan found a Kobayashi--Hitchin like correspondence \cite{DMS}. In particular, we show that under suitable conditions on the stability vector $\rho$, they arise from $\mu$-stability conditions adapted to sheaves of dimension $d$.

We use here the convention of the asymptotic development at $0$ instead of the one at infinity.

Let us consider a central charge,
$$
Z(\cE) = \left(\left(\sum_{i = 0}^n \rho_i[\omega]^i\right) \cup U \cup \ch(\cE)\right)^{(n,n)},
$$
Here, the only assumption we make on the $\rho_i$ is that $\Im(\rho_n) > 0$ and $\Im(\overline{\rho_n}\rho_{n - 1}) > 0$. In this case, we call $\rho$ a \textit{stability vector}. We always assume that $U_0 = 1$. We also define their polynomial counterparts, obtained by replacing $\omega$ with $\epsilon^{-1}\omega$ for small positive real numbers $\epsilon$ and normalising by $\epsilon^n$,
$$
Z_\epsilon(\cE) = \left(\left(\sum_{i = 0}^n \rho_i\epsilon^{n - i}[\omega]^i\right) \cup U \cup \ch(\cE)\right)^{(n,n)},
$$

Notice that,
$$
Z_\epsilon(\cE) = \rho_n[\omega^n] + \mathrm{O}(\epsilon).
$$
Therefore, by assumption on $\rho_n$, for all $\epsilon > 0$ small enough, $\Im(Z_\epsilon(\cE)) > 0$. Let $\phi_\epsilon(\cE) \in ]0,1[$ such that $Z_\epsilon(\cE) = \abs{Z_\epsilon(\cE)}\e^{\i\pi\phi_\epsilon(\cE)}$.

\begin{lemma}[{\cite[Lemma 2.8]{DMS}}]\label{LEM:Caractérisation stabilité asymptotique}
    Let $\cF \subset \cE$ be a sub-sheaf. We have equivalence between
    \begin{enumerate}
        \item For $\epsilon > 0$ small enough, $\phi_\epsilon(\cF) <(\leq)\, \phi_\epsilon(\cE)$.
        \item For $\epsilon > 0$ small enough, $\Im(\overline{Z_\epsilon(\cE)}Z_\epsilon(\cF)) <(\leq)\, 0$.
        \item For $\epsilon > 0$ small enough, $-\frac{\mathrm{Re}(Z_\epsilon(\cF))}{\Im(Z_\epsilon(\cF))} <(\leq)\, -\frac{\mathrm{Re}(Z_\epsilon(\cE))}{\Im(Z_\epsilon(\cE))}$.
    \end{enumerate}
\end{lemma}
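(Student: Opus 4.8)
The three conditions all concern, for each small $\epsilon>0$, the single pair $z=Z_\epsilon(\cE)$, $w=Z_\epsilon(\cF)$, so the plan is to prove that (1), (2) and (3) are equivalent \emph{for each fixed} $\epsilon$ small enough that $z$ and $w$ both lie in the open upper half-plane; the three ``for $\epsilon$ small enough'' statements then coincide at once. The one preliminary point is therefore to check that $z$ and $w$ do lie in the open upper half-plane for $\epsilon$ small.

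For $\cE$ this is already recorded, since $Z_\epsilon(\cE)=\rho_n[\omega^n]+\mathrm{O}(\epsilon)$ and $\Im(\rho_n)>0$. For $\cF$ I would run the identical leading-order expansion: grouping the terms of $Z_\epsilon(\cF)$ by powers of $\epsilon$ and using that $\ch_m(\cF)$ vanishes below the codimension of $\supp(\cF)$, the lowest-order term is a positive real multiple of $\rho_{\dim\cF}$, the multiple being the generalised rank $\Rk(\cF)\in\N^*$. Hence $\Im(Z_\epsilon(\cF))>0$ for $\epsilon$ small, so that $\phi_\epsilon(\cF)\in\,]0,1[$ is well-defined and the denominators in (3) are nonzero. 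This is the one step where the hypotheses on $\rho$ and the positivity of $\Rk$ are used, and it is really the only point that needs care; in particular it is automatic when $\dim\cF=n$, as for any sub-sheaf of a torsion-free sheaf. The two remaining implications are elementary.

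For (1)$\Leftrightarrow$(2) I would pass to polar form, $z=\abs{z}\e^{\i\pi\phi_\epsilon(\cE)}$ and $w=\abs{w}\e^{\i\pi\phi_\epsilon(\cF)}$ with both arguments in $]0,1[$, so that
\[
\Im(\overline{z}\,w)=\abs{z}\,\abs{w}\,\sin\!\big(\pi(\phi_\epsilon(\cF)-\phi_\epsilon(\cE))\big).
\]
Since $\phi_\epsilon(\cF)-\phi_\epsilon(\cE)\in\,]-1,1[$, the angle lies in $]-\pi,\pi[$, on which $\sin$ vanishes only at $0$ and carries the sign of its argument; thus $\Im(\overline{z}\,w)<0$ (resp. $\leq 0$, resp. $=0$) exactly when $\phi_\epsilon(\cF)<\phi_\epsilon(\cE)$ (resp. $\leq$, resp. $=$).

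For (2)$\Leftrightarrow$(3) I would instead write $z=a+b\i$, $w=c+d\i$ with $b=\Im(Z_\epsilon(\cE))>0$ and $d=\Im(Z_\epsilon(\cF))>0$ from the preliminary step; then $\Im(\overline{z}\,w)=ad-bc$, and dividing $ad-bc<0$ by $bd>0$ gives $a/b<c/d$, i.e. $-\mathrm{Re}(Z_\epsilon(\cF))/\Im(Z_\epsilon(\cF))<-\mathrm{Re}(Z_\epsilon(\cE))/\Im(Z_\epsilon(\cE))$, with the non-strict and equality cases handled identically. The strict positivity of both imaginary parts is exactly what makes this division sign-preserving, which is why the open upper half-plane membership had to be secured first; beyond that bookkeeping the argument is purely formal.
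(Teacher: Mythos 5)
The paper itself gives no proof of this lemma --- it is quoted from \cite[Lemma 2.8]{DMS} --- so there is nothing internal to compare against; judged on its own terms, your two equivalences are correct and standard \emph{once both central charges are known to lie in the open upper half-plane}: $(1)\Leftrightarrow(2)$ via $\Im(\overline{z}w)=\abs{z}\abs{w}\sin(\pi(\phi_\epsilon(\cF)-\phi_\epsilon(\cE)))$ with the phase difference in $]-1,1[$, and $(2)\Leftrightarrow(3)$ via cross-multiplication by the positive product of imaginary parts. The genuine gap is in your preliminary step. The leading term of $Z_\epsilon(\cF)$ is indeed $\Rk(\cF)\rho_{\dim\cF}\epsilon^{\codim\cF}$ up to a positive factor (Proposition \ref{PRO:Premiers caractères de Chern} and Equation (\ref{EQ:Expression charge centrale})), but concluding $\Im(Z_\epsilon(\cF))>0$ requires $\Im(\rho_{\dim\cF})>0$, and the only hypotheses in force here are $\Im(\rho_n)>0$ and $\Im(\overline{\rho_n}\rho_{n-1})>0$. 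These control no $\Im(\rho_j)$ with $j<n$ (even $\Im(\overline{\rho_n}\rho_{n-1})>0$ does not force $\Im(\rho_{n-1})>0$: take $\rho_n=\i$ and $\rho_{n-1}=-1-\i$), and even a stability vector adapted to coherent sheaves only confines the $\rho_j$ to a rotated half-plane, cf.\ the set $\H$ and the rotation $\lambda$ in Proposition \ref{PRO:Caractérisation adapté}. So for a torsion sub-sheaf $\cF$ the membership $\Im(Z_\epsilon(\cF))>0$ can fail; then $\phi_\epsilon(\cF)$ is undefined and your division by $\Im(Z_\epsilon(\cF))$ in $(2)\Leftrightarrow(3)$ reverses the inequality. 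The same caveat already touches $\cE$: the expansion $Z_\epsilon(\cE)=\rho_n[\omega^n]+\mathrm{O}(\epsilon)$ carries the factor $\ch_0(\cE)=\rk(\cE)$, which vanishes for torsion sheaves.

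The fix is not to prove upper-half-plane membership from the stated hypotheses --- it is not a theorem there --- but to treat it as the implicit standing assumption under which the lemma is meant (as in \cite{DMS}, where one works inside Bayer's heart and all objects have central charge eventually in the upper half-plane), or to restrict to the case where it is automatic, e.g.\ $\cE$ torsion-free, so that every non-zero sub-sheaf has dimension $n$ and positive rank and $\Im(\rho_n)>0$ suffices, exactly as you observe. With that reading, your preliminary step becomes a hypothesis rather than a claim, and the remainder of your argument is a complete and correct proof. It is also worth noting that where the lemma is later applied to sub-sheaves of strictly smaller dimension (Theorem \ref{THE:Caracterisation stabilité}), only condition $(2)$ is actually used, and that is the one condition whose statement survives without the upper-half-plane assumption.
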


\begin{definition}
    We say that a sub-sheaf $\cF$ of $\cE$ (strictly) destabilises $\cE$ asymptotically if one of the equivalent conditions of Lemma \ref{LEM:Caractérisation stabilité asymptotique} is fulfilled. We say that $\cE$ is \textit{asymptotically $Z$-(semi-)stable} if it is (strictly) destabilised by no non-zero sub-sheaves.
\end{definition}

It is not trivial that asymptotic $Z$-stability is equivalent to a $\mu$-stability defined in Definition \ref{DEF:P-stabilité}. We prove it in the next sections.

\subsection{Twisted Chern character and degrees}

Let us define the twisted Chern character of an element $a \in K(X)$ as the multi-degree class,
\begin{align*}
    \ch^U(a) & = \ch(a) \cup U\\
    & = \sum_{\underset{i + j \leqslant n}{i,j = 0}}^n \ch_i(a) \cup U_j\\
    & = \sum_{p = 0}^n \sum_{j = 0}^p \ch_{p - j}(a) \cup U_j\\
    & = \sum_{p = 0}^n \left(\ch_p(a) + \sum_{j = 1}^p \ch_{p - j}(a) \cup U_j\right).
\end{align*}
We see that for all $p$, the $(p,p)$ part of $\ch^U(a)$ is
\begin{equation}\label{EQ:Expression chU}
    \ch_p^U(a) = \ch_p(a) + \sum_{j = 1}^p \ch_{p - j}(a) \cup U_j.
\end{equation}
Introducing the twisted Chern class enables us to express in shorter way the central charge of $a \in K(X)$,
$$
Z_\epsilon(a) = \left(\left(\sum_{i = 0}^n \rho_i\epsilon^{n - i}[\omega]^i\right) \cup \ch^U(a)\right)^{(n,n)} = \sum_{i = 0}^n \rho_i\ch_{n - i}^U(a) \cup [\omega]^i\epsilon^{n - i}.
$$

\begin{proposition}\label{PRO:Premiers caractères de Chern}
    Let $\cE$ be a sheaf of codimension $c$. For all $0 \leq i \leq c - 1$, $\ch_i^U(\cE) = 0$ and $\ch_c^U(\cE) = \ch_c(\cE) > 0$.
\end{proposition}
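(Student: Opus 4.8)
The plan is to read off the graded pieces of $\ch(\cE)$ from the Grothendieck--Riemann--Roch computation already performed above for the generalised rank, and then substitute the resulting vanishings into the explicit formula (\ref{EQ:Expression chU}) for $\ch_p^U$. Since $\cE$ has codimension $c$, it has dimension $d = n - c$, and the very same computation that produced $\Rk(\cE) \in \N^*$ gives
$$
\ch(\cE) = \sum_{i \in I} \rk(\cE_{|V_i})[V_i] + \textrm{h.d.t.},
$$
where the $V_i$ are the codimension-$c$ irreducible components of $\supp(\cE)$ and "h.d.t." collects classes lying in $H^{p,p}(X)$ with $p > c$. Reading this identity degree by degree is exactly the statement that $\ch_p(\cE) = 0$ for every $p < c$, together with the identification $\ch_c(\cE) = \sum_{i \in I} \rk(\cE_{|V_i})[V_i]$.

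Next I would feed these vanishings into (\ref{EQ:Expression chU}). For $0 \leq i \leq c - 1$, the term $\ch_i(\cE)$ vanishes because $i < c$, and each correction term $\ch_{i - j}(\cE) \cup U_j$ with $1 \leq j \leq i$ vanishes because $i - j < i < c$; hence $\ch_i^U(\cE) = 0$. For $i = c$, the correction terms $\ch_{c - j}(\cE) \cup U_j$ with $1 \leq j \leq c$ all vanish since $c - j < c$, so the formula collapses to $\ch_c^U(\cE) = \ch_c(\cE)$. This is pure bookkeeping on the degrees, using only that $U_0 = 1$ and that the $U_j$ raise the codimension by $j$.

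Finally, for the positivity I would observe that $\ch_c(\cE) = \sum_{i \in I} \rk(\cE_{|V_i})[V_i]$ is a nonzero effective class: the index set $I$ is nonempty and every multiplicity $\rk(\cE_{|V_i})$ is a strictly positive integer. Pairing with the ample polarisation then yields $\ch_c(\cE) \cup [\omega]^{n - c} = \Rk(\cE) > 0$, which is the sense in which $\ch_c(\cE) > 0$. The only genuinely delicate point is pinning down the meaning of this inequality, and it is mild: it is simply the positivity of the intersection of the effective cycle class $\ch_c(\cE)$ with a power of the ample class $[\omega]$, inherited verbatim from the earlier computation of $\Rk(\cE)$. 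Everything else reduces to tracking which cohomological degrees survive.
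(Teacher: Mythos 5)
Your proposal is correct and follows essentially the same route as the paper: the paper's proof simply points back to the Grothendieck--Riemann--Roch computation for $\Rk(\cE)$ at the start of Sub-section \ref{SEC:Définitions stabilité généralisée}, which gives $\ch_i(\cE) = 0$ for $i < c$ and $\ch_c(\cE) = \sum_V \rk(\cE_{|V})[V]$, and then implicitly uses (\ref{EQ:Expression chU}) exactly as you do. Your version just spells out the degree bookkeeping and the interpretation of the positivity (as an effective class pairing positively with $[\omega]^{n-c}$) that the paper leaves implicit.
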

\begin{proof}
The proof is the same as in the beginning of Sub-section \ref{SEC:Définitions stabilité généralisée}. As a consequence of the Grothendieck--Riemann--Roch formula, we have $\ch_i^U(\cE) = 0$ if $i < c$ and
$$
\ch_c^U(\cE) = \ch_c(\cE) = \sum_{V \subset \supp(\cE)} \rk(\cE_{|V})[V]
$$
where the sum ranges on the irreducible components of $\cE$ of codimension $c$.
\end{proof}

\subsection{Adapted stability vectors}

The conditions we require on the stability vector ($\Im(\rho_n) > 0$ and $\Im(\overline{\rho_n}\rho_{n - 1}) > 0$) are quite weak. They are enough to obtain the Kobayashi--Hitchin like correspondence for semi-stable sufficiently smooth vector bundles shown by Dervan--McCarthy--Sektnan \cite[Theorem 1.1]{DMS}. It is due to the fact that the only destabilising sub-sheaves (in the sense of the slope) are locally free so we don't have to bother about singularities. However, on the algebraic point of view, it can lead to pathological behaviour like a torsion-free sheaf destabilised by a sub-sheaf whose quotient is a torsion sheaf.

\begin{example}\label{EX:Contre-exemple conjecture DMS}
    If $X$ has dimension $n \geq 3$, $V \subset X$ is an irreducible sub-variety of codimension $3$ with ideal sheaf $\cI_V \subset \cO_X$ and
    $$
    Z_\epsilon(\cE) = \epsilon^n(-\e^{-\i\epsilon^{-1}[\omega]} \cup \ch(\cE))^{(n,n)}
    $$
    is the central charge of the dHYM equation, then one can compute that,
    $$
    \Im(\overline{Z_\epsilon(\cO_X)Z_\epsilon(i_*\cO_V)}) \underset{\epsilon \rightarrow 0}{\sim} \frac{1}{n!(n - 3)!}\epsilon^3[\omega]^n([\omega]^{n - m} \cup [V]) > 0.
    $$
    Therefore, if asymptotic $Z$-stability for this central charge can be related to a $\mu$-stability condition, this $\mu$-stability condition won't be adapted to torsion-free sheaves. Notice that it builds a counter-example to \cite[Conjecture 1.6]{DMS} even in the large volume regime because the flat metric of $\cO_X$ is a solution to the dHYM equation.
\end{example}

In order to avoid this pathological behaviour and be able to build HN filtrations for example, we need additional assumptions on the $\rho_i$.

\begin{definition}
    When $\rho = (\rho_0,\ldots,\rho_n)$ is a stability vector such that for all $i$, $\Im(\overline{\rho_i}\rho_{i - 1}) > 0$ (in particular, all the $\rho_i$ are non-zero), we say that it is a \textit{Bayer stability vector} \cite[Definition 3.2.1]{Bayer}.
\end{definition}
\begin{definition}\label{DEF:rho adapté}
    Let $0 \leq d \leq n$. When $\rho = (\rho_0,\ldots,\rho_n)$ is a stability vector such that for all $i < d$, $\Im(\overline{\rho_d}\rho_i) > 0$, we say that $\rho$ is \textit{adapted to sheaves of dimension $d$}. If $d = n$, we say that it is \textit{adapted to torsion-free sheaves}. If it holds for all $d$, we say that $\rho$ is \textit{adapted to coherent sheaves}.
\end{definition}

Of course, stability vectors adapted to coherent sheaves are Bayer and adapted to torsion-free sheaves. The reciprocal is true.

\begin{lemma}\label{LEM:Bayer adapté}
    A stability vector $\rho$ is adapted to coherent sheaves if and only if it is Bayer and adapted to torsion-free sheaves.
\end{lemma}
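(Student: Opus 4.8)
The forward implication is immediate from the definitions: if $\rho$ is adapted to coherent sheaves, then taking $d = n$ shows it is adapted to torsion-free sheaves, and for each $i$ the adaptedness to dimension $i$ applied to the subscript $i - 1 < i$ gives $\Im(\overline{\rho_i}\rho_{i - 1}) > 0$, so $\rho$ is Bayer. The content of the lemma is the reciprocal, which I would prove by reducing the sign conditions to comparisons of arguments of complex numbers. First I would exploit that every inequality in play is invariant under a simultaneous rotation: for any $\phi \in \R$ one has $\Im(\overline{\e^{\i\phi}\rho_a}\,\e^{\i\phi}\rho_b) = \Im(\overline{\rho_a}\rho_b)$. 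Replacing $\rho$ by $\e^{\i\phi}\rho$ for a suitable $\phi$ (possible since $\rho_n \neq 0$), I may assume $\rho_n$ is a positive real number.

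With this normalisation the hypothesis "adapted to torsion-free" reads $\Im(\rho_i) > 0$ for all $i < n$, that is, all of $\rho_0, \ldots, \rho_{n - 1}$ lie in the open upper half-plane, so I can pick their principal arguments $\theta_i = \arg(\rho_i) \in\, ]0, \pi[$ for $0 \leq i \leq n - 1$. The key dictionary is then that for two non-zero complex numbers $a, b$ whose arguments $\alpha, \beta$ both lie in $]0, \pi[$, the difference $\beta - \alpha$ lies in $]-\pi, \pi[$, whence $\Im(\overline{a}b) = \abs{a}\,\abs{b}\sin(\beta - \alpha) > 0$ holds if and only if $\beta > \alpha$.

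Applying this to the Bayer inequalities $\Im(\overline{\rho_i}\rho_{i - 1}) > 0$ for $1 \leq i \leq n - 1$ yields $\theta_{i - 1} > \theta_i$, hence
$$
\theta_0 > \theta_1 > \cdots > \theta_{n - 1}.
$$
For any pair $i < d$ with $d \leq n - 1$ the same dictionary then produces $\Im(\overline{\rho_d}\rho_i) > 0$ from $\theta_i > \theta_d$, while the remaining case $d = n$ is exactly the torsion-free hypothesis. Thus $\rho$ is adapted to sheaves of dimension $d$ for every $d$, i.e. adapted to coherent sheaves.

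The one delicate point—and the only place the torsion-free hypothesis is genuinely needed—is that pairwise conditions of the shape "rotate counterclockwise by an angle in $]0, \pi[$" do \emph{not} chain automatically: iterating them, the arguments could accumulate past $\pi$ and wrap around, destroying transitivity (Example \ref{EX:Contre-exemple conjecture DMS} is morally of this type). The role of "adapted to torsion-free" is precisely to confine all the $\rho_i$ to a single open half-plane, on which the relation $\Im(\overline{\,\cdot\,}\,\cdot) > 0$ coincides with the genuine total order given by the argument; on such a half-plane the consecutive Bayer inequalities transitively imply all the inequalities $\Im(\overline{\rho_d}\rho_i) > 0$ for $i < d$. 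Making this confinement and the resulting linearisation explicit is the crux of the argument, the rest being the routine translation above.
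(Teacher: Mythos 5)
Your proof is correct, but it takes a slightly different route from the paper's. The paper argues by induction on $j$: since $\Im(\overline{\rho_n}\rho_j) > 0$, the pair $(\rho_j,\rho_n)$ is an $\R$-basis of $\C$, and writing $\rho_{j+1} = a\rho_j + b\rho_n$ one extracts $a, b > 0$ from the Bayer and torsion-free hypotheses, whence $\Im(\overline{\rho_{j+1}}\rho_i) = a\Im(\overline{\rho_j}\rho_i) + b\Im(\overline{\rho_n}\rho_i) > 0$ follows inductively. You instead rotate so that $\rho_n \in \R_{>0}$, observe that the torsion-free hypothesis confines $\rho_0,\ldots,\rho_{n-1}$ to the open upper half-plane, and convert every condition $\Im(\overline{a}b) > 0$ into a strict comparison of arguments in $]0,\pi[$, so that the consecutive Bayer inequalities chain into a total order $\theta_0 > \cdots > \theta_{n-1}$. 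The underlying mechanism is the same — the half-plane confinement is what restores transitivity, and your closing remark identifies this accurately — but your version makes the geometry explicit and arguably explains \emph{why} the lemma is true, whereas the paper's positive-combination induction is more algebraic and avoids any choice of argument or rotation. Both proofs are complete; note only that your rotated vector no longer satisfies the normalisation $\Im(\rho_n) > 0$, which is harmless since, as you say, all the inequalities being verified are rotation-invariant.
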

\begin{proof}
Assume $\rho$ is Bayer and adapted to torsion-free sheaves. Let $0 \leq i < j \leq n - 1$. $\Im(\overline{\rho_n}\rho_j) > 0$ so $(\rho_n,\rho_j)$ is an $\R$-base of $\C$. Let us write $\rho_{j + 1} = a\rho_j + b\rho_n$. $\Im(\overline{\rho_n}\rho_{j + 1}) > 0$ so $a\Im(\overline{\rho_n}\rho_j) > 0$. Since we also have $\Im(\overline{\rho_n}\rho_j) > 0$, $a > 0$. Similarly, $b > 0$. Therefore,
$$
\Im(\overline{\rho_{j + 1}}\rho_i) = a\Im(\overline{\rho_j}\rho_i) + b\Im(\overline{\rho_n}\rho_i) > a\Im(\overline{\rho_j}\rho_i).
$$
By induction on $j \geq i + 1$, $\Im(\overline{\rho_j}\rho_i) > 0$ for all $i < j$ so $\rho$ is adapted to coherent sheaves.
\end{proof}

\subsection{Characterisation of asymptotic $Z$-stability when the stability vector is adapted}

\begin{definition}
    We define the \textit{generalised degrees} and \textit{generalised slopes} of a sheaf $\cE$ of pure codimension $c$ as follows.
    $$
    \deg_i^{U,[\omega]}(\cE) = \ch_i^U(a) \cup [\omega]^{n - i}, \qquad \mu_i^{U,[\omega]}(\cE) =
    \left\{
    \begin{array}{rl}
        +\infty & \textrm{if } i < c,\\
        \displaystyle \frac{\deg_i^{U,[\omega]}(\cE)}{\Rk(\cE)} & \textrm{else}.
    \end{array}
    \right.
    $$
    We also define the \textit{slopes vector} of $\cE$ as $\mu^{U,[\omega]}(\cE) = (\mu_i^{U,[\omega]}(\cE))_{0 \leq i \leq n} \in (\R \cup \{+\infty\})^{n + 1}$. Notice that by Proposition \ref{PRO:Premiers caractères de Chern},
    $$
    \deg_c^U(\cE) = \Rk(\cE).
    $$
    In particular, slopes are well-defined and $\mu^{U,[\omega]}(\cE) = (+\infty,\ldots,+\infty,1,*,\ldots,*)$ where the $1$ is at position $c$.
\end{definition}
We can rewrite the central charge in function of these degrees,
\begin{equation}\label{EQ:Expression charge centrale}
    Z_\epsilon(\cE) = \sum_{i = 0}^n \rho_{n - i}\deg_i^{U,[\omega]}(\cE)\epsilon^i.
\end{equation}
Let $\lex$ be the lexicographic order of $(\R \cup \{+\infty\})^{n + 1}$ where the first coefficient is the one with highest weight. It is a total order.

\begin{theorem}\label{THE:Caracterisation stabilité}
    Let $\cE$ be a sheaf of pure dimension $d$ and $\cF$ a sub-sheaf. Assume that $\rho$ is adapted to sheaves of dimension $d$. Then,
    \begin{enumerate}
        \item $\cF$ destabilises $\cE$ strictly if and only if $\mu^{U,[\omega]}(\cF) \gtx \mu^{U,[\omega]}(\cE)$,
        \item $\cF$ destabilises $\cE$ if and only if $\mu^{U,[\omega]}(\cF) \gex \mu^{U,[\omega]}(\cE)$,
        \item $\cF$ doesn't destabilise $\cE$ if and only if $\mu^{U,[\omega]}(\cF) \ltx \mu^{U,[\omega]}(\cE)$.
    \end{enumerate}
\end{theorem}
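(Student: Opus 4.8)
The plan is to express, for a nonzero subsheaf $\cF$, the sign of $\Im(\overline{Z_\epsilon(\cE)}Z_\epsilon(\cF))$ for small $\epsilon>0$ in terms of the lexicographic comparison of the slopes vectors, and then translate signs into (strict) destabilisation via the phases $\phi_\epsilon$. First I would note that a nonzero subsheaf $\cF$ of a sheaf $\cE$ of pure dimension $d$ is again of pure dimension $d$, hence of the same codimension $c=n-d$. By the definition of the slopes vector this means $\mu^{U,[\omega]}(\cE)$ and $\mu^{U,[\omega]}(\cF)$ both have the shape $(+\infty,\dots,+\infty,1,\ast,\dots,\ast)$ with the entry $1$ in position $c$; in particular they agree on the coordinates $0,\dots,c$, so any lexicographic difference occurs at some position $c+k$ with $k\geq 1$.

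Using \eqref{EQ:Expression charge centrale}, Proposition \ref{PRO:Premiers caractères de Chern} and $\deg_c^{U,[\omega]}=\Rk$, I would factor
$$Z_\epsilon(\cG)=\epsilon^{c}\,\Rk(\cG)\,z_\epsilon(\cG),\qquad z_\epsilon(\cG)=\sum_{p=0}^{d}\rho_{d-p}\,\mu_{c+p}^{U,[\omega]}(\cG)\,\epsilon^{p},$$
for $\cG\in\{\cE,\cF\}$, whose constant term is $\rho_d$ since $\mu_c^{U,[\omega]}=1$. As $\Rk(\cE),\Rk(\cF)>0$, the sign of $\Im(\overline{Z_\epsilon(\cE)}Z_\epsilon(\cF))$ equals that of $\Im(\overline{z_\epsilon(\cE)}z_\epsilon(\cF))$.

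The crucial step is a subtraction trick. Since $\overline{z_\epsilon(\cE)}z_\epsilon(\cE)$ is real,
$$\Im\!\left(\overline{z_\epsilon(\cE)}z_\epsilon(\cF)\right)=\Im\!\left(\overline{z_\epsilon(\cE)}\bigl(z_\epsilon(\cF)-z_\epsilon(\cE)\bigr)\right),$$
and the difference $z_\epsilon(\cF)-z_\epsilon(\cE)=\sum_{p=1}^{d}\rho_{d-p}\bigl(\mu_{c+p}^{U,[\omega]}(\cF)-\mu_{c+p}^{U,[\omega]}(\cE)\bigr)\epsilon^{p}$ has vanishing constant term (its coefficients are real). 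If the slopes vectors coincide this difference is identically zero, whence $\Im(\overline{z_\epsilon(\cE)}z_\epsilon(\cF))\equiv 0$. Otherwise, with $k\geq 1$ the first index where they differ, multiplying the leading term $\overline{\rho_d}$ of $\overline{z_\epsilon(\cE)}$ with the leading term of the difference yields
$$\Im\!\left(\overline{z_\epsilon(\cE)}z_\epsilon(\cF)\right)=\Im(\overline{\rho_d}\rho_{d-k})\bigl(\mu_{c+k}^{U,[\omega]}(\cF)-\mu_{c+k}^{U,[\omega]}(\cE)\bigr)\epsilon^{k}+\mathrm{O}(\epsilon^{k+1}).$$
This is exactly where the hypothesis enters: since $0\leq d-k<d$, $\rho$ being adapted to sheaves of dimension $d$ gives $\Im(\overline{\rho_d}\rho_{d-k})>0$, so for small $\epsilon>0$ the sign of $\Im(\overline{z_\epsilon(\cE)}z_\epsilon(\cF))$ is precisely the sign of the first nonzero coordinate difference, i.e.\ the sign prescribed by the lexicographic comparison of $\mu^{U,[\omega]}(\cF)$ and $\mu^{U,[\omega]}(\cE)$.

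To conclude, I would use that $Z_\epsilon$ lies in the upper half plane, so $\Im(\overline{Z_\epsilon(\cE)}Z_\epsilon(\cF))>0$ (resp.\ $=0$, resp.\ $<0$) is equivalent to $\phi_\epsilon(\cF)>\phi_\epsilon(\cE)$ (resp.\ $=$, resp.\ $<$), that is to $\cF$ strictly destabilising (resp.\ destabilising but not strictly, resp.\ not destabilising) $\cE$. Combined with the previous paragraph this gives the three equivalences, the equality case $\mu^{U,[\omega]}(\cF)=\mu^{U,[\omega]}(\cE)$ accounting for $\gex$ but not $\gtx$. I expect the main obstacle to be the clean isolation of the leading order: the naive expansion produces a coefficient $\sum_{p+q=m}\Im(\overline{\rho_{d-p}}\rho_{d-q})\deg_{c+p}^{U,[\omega]}(\cE)\deg_{c+q}^{U,[\omega]}(\cF)$ whose vanishing for $m<k$ is not transparent term by term; it is the subtraction trick, together with the sign input $\Im(\overline{\rho_d}\rho_{d-k})>0$ coming from adaptedness, that makes the leading coefficient both nonzero and of the correct sign.
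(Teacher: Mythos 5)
Your proof is correct, and its central computation is organised genuinely differently from the paper's. Both arguments come down to showing that the first nonvanishing coefficient of the polynomial $\epsilon \mapsto \Im(\overline{Z_\epsilon(\cE)}Z_\epsilon(\cF))$ equals $\Im(\overline{\rho_d}\rho_{d-k})\Rk(\cE)\Rk(\cF)\bigl(\mu_{c+k}^{U,[\omega]}(\cF)-\mu_{c+k}^{U,[\omega]}(\cE)\bigr)$, with the factor $\Im(\overline{\rho_d}\rho_{d-k})>0$ supplied by adaptedness; the paper reaches this by expanding the full double sum and antisymmetrising, pairing the terms indexed by $j$ and $p-j$ into the determinants $\deg_j^{U,[\omega]}(\cE)\deg_{p-j}^{U,[\omega]}(\cF)-\deg_{p-j}^{U,[\omega]}(\cE)\deg_j^{U,[\omega]}(\cF)$ and then checking that every coefficient up to order $c+c'$ vanishes. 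Your normalise-then-subtract device --- dividing out $\epsilon^c\Rk$ so that both $z_\epsilon(\cE)$ and $z_\epsilon(\cF)$ have constant term $\rho_d$, and then using $\Im(\overline{z_\epsilon(\cE)}z_\epsilon(\cE))=0$ --- kills all the lower-order cross terms in one stroke and exhibits the leading coefficient as a single product rather than a telescoping sum; it is a cleaner route to the same formula, and you correctly identify it as the step that is opaque in the naive expansion. A second genuine difference: you dispose of the case $\dim\cF<\dim\cE$ by noting that a nonzero subsheaf of a pure sheaf is pure of the same dimension, which is valid under the theorem's hypotheses; the paper instead computes that case explicitly (leading term $\Im(\overline{\rho_{\dim\cE}}\rho_{\dim\cF})\Rk(\cE)\Rk(\cF)\epsilon^{\codim\cE+\codim\cF}>0$) and reuses that computation for the subsequent corollary on sheaves of non-pure dimension, so that case would still have to be recorded somewhere if your argument replaced the paper's. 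The only point to make explicit is that $\cF$ must be nonzero for $\Rk(\cF)$ and the slope vector to be defined, an assumption the paper leaves implicit as well.
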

\begin{proof}
We have $\dim(\cF) \leq \dim(\cE) = d$. Let us start with the case where $\dim(\cF) < \dim(\cE)$. In this case, $\mu^{U,[\omega]}(\cF) \gtx \mu^{U,[\omega]}(\cE)$ thus we must show that $\cF$ destabilises $\cE$ strictly. Indeed, by Proposition \ref{PRO:Premiers caractères de Chern} and (\ref{EQ:Expression charge centrale}),
$$
Z_\epsilon(\cE) = \sum_{i = 0}^n \rho_{n - i}\deg_i^{U,[\omega]}(\cE)\epsilon^i = \rho_{\dim(\cE)}\deg_{\codim(\cE)}^{U,[\omega]}(\cE)\epsilon^{\codim(\cE)} + \mathrm{O}(\epsilon^{\codim(\cE) + 1}),
$$
and a similar formula holds for $\cF$. Therefore,
\begin{align*}
    \Im(\overline{Z_\epsilon(E)}Z_\epsilon(\cF)) & = \Im(\overline{\rho_{\dim(\cE)}}\rho_{\dim(\cF)})\deg_{\codim(\cE)}^{U,[\omega]}(\cE)\deg_{\codim(\cF)}^{U,[\omega]}(\cF)\epsilon^{\codim(\cE) + \codim(\cF)}\\
    & + \mathrm{O}(\epsilon^{\codim(\cE) + \codim(\cF) + 1}).
\end{align*}
Using the fact that the bottom degrees $\deg_{\codim(\cE)}^{U,[\omega]}(\cE) = \Rk(\cE)$ and $\deg_{\codim(\cF)}^{U,[\omega]}(\cF) = \Rk(\cF)$ are positive and that $\Im(\overline{\rho_{\dim(\cE)}}\rho_{\dim(\cF)}) > 0$ by assumption on the $\rho_i$ and the fact that $\dim(\cE) > \dim(\cF)$, we deduce that this quantity is positive for $\epsilon$ small thus $\cF$ strictly destabilises $\cE$.

Assume now that $\dim(\cF) = \dim(\cE)$ and set $c = \codim(\cF) = \codim(\cE)$. Recall that $\Im(\overline{\rho_{n - c}}\rho_i) > 0$ when $i < c$. For simplicity, we use the convention that $\rho_i = \deg_i^{U,[\omega]}(\cE) = \deg_i^{U,[\omega]}(\cF) = 0$ if $i < 0$ or $i > n$.
\begin{align*}
    \Im(\overline{Z_\epsilon(\cE)}Z_\epsilon(\cF)) & = \Im\left(\left(\sum_{i = 0}^n \overline{\rho_{n - i}}\deg_i^{U,[\omega]}(\cE)\epsilon^i\right)\left(\sum_{j = 0}^n \rho_{n - j}\deg_j^{U,[\omega]}(\cF)\epsilon^j\right)\right)\\
    & = \sum_{p = 0}^{2n}\sum_{j \in \Z} \Im(\overline{\rho_{n - p + j}}\rho_{n - j})\underbrace{\deg_{p - j}^{U,[\omega]}(\cE)}_{= 0 \textrm{ if } j > p - c}\ \underbrace{\deg_j^{U,[\omega]}(\cF)}_{= 0 \textrm{ if } j < c}\epsilon^p\\
    & = \sum_{p = 2c}^{2n}\sum_{j = c}^{p - c} \Im(\overline{\rho_{n - p + j}}\rho_{n - j})\deg_{p - j}^{U,[\omega]}(\cE)\deg_j^{U,[\omega]}(\cF)\epsilon^p.\\
\end{align*}
At $p = 2c$, the only value that takes $j$ is $c$ thus $\Im(\overline{\rho_{n - p + j}}\rho_{n - j}) = \Im(\overline{\rho_{n - c}}\rho_{n - c}) = 0$. We deduce that the coefficient at $\epsilon^{2c}$ vanishes. More generally, if $p$ is even and $j = \frac{p}{2}$, $\Im(\overline{\rho_{n - p + j}}\rho_{n - j}) = 0$. When $p > c$, $j$ takes at least two values and the coefficient $a_p$ at order $p$ of the polynomial $\epsilon \mapsto \Im(\overline{Z_\epsilon(\cE)}Z_\epsilon(\cF))$ is,
\begin{align}
    a_p & = \sum_{j = c}^{p - c} \Im(\overline{\rho_{n - p + j}}\rho_{n - j})\deg_{p - j}^{U,[\omega]}(\cE)\deg_j^{U,[\omega]}(\cF)\nonumber\\
    & = \sum_{j = c}^{\left\lfloor \frac{p}{2} \right\rfloor} \Im(\overline{\rho_{n - p + j}}\rho_{n - j})\deg_{p - j}^{U,[\omega]}(\cE)\deg_j^{U,[\omega]}(\cF) + \sum_{j = \left\lceil \frac{p}{2} \right\rceil}^{p - c} \Im(\overline{\rho_{n - p + j}}\rho_{n - j})\deg_{p - j}^{U,[\omega]}(\cE)\deg_j^{U,[\omega]}(\cF)\nonumber\\
    & = -\sum_{j = c}^{\left\lfloor \frac{p}{2} \right\rfloor} \Im(\overline{\rho_{n - j}}\rho_{n - p + j})\deg_{p - j}^{U,[\omega]}(\cE)\deg_j^{U,[\omega]}(\cF) + \sum_{j = c}^{\left\lfloor \frac{p}{2} \right\rfloor} \Im(\overline{\rho_{n - j}}\rho_{n - p + j})\deg_j^{U,[\omega]}(\cE)\deg_{p - j}^{U,[\omega]}(\cF)\nonumber\\
    & = \sum_{j = c}^{\left\lfloor \frac{p}{2} \right\rfloor} \Im(\overline{\rho_{n - j}}\rho_{n - p + j})(\deg_j^{U,[\omega]}(\cE)\deg_{p - j}^{U,[\omega]}(\cF) - \deg_{p - j}^{U,[\omega]}(\cE)\deg_j^{U,[\omega]}(\cF))\nonumber\\
    & = \sum_{j = c}^{\left\lfloor \frac{p}{2} \right\rfloor} \Im(\overline{\rho_{n - j}}\rho_{n - p + j})\deg_c^{U,[\omega]}(\cE)\deg_c^{U,[\omega]}(\cF)(\mu_j^{U,[\omega]}(\cE)\mu_{p - j}^{U,[\omega]}(\cF) - \mu_{p - j}^{U,[\omega]}(\cE)\mu_j^{U,[\omega]}(\cF)).\label{EQ:Expression ap}
\end{align}
Let $c \leq c' \leq n$ the largest integer such that $\mu_{c'}^{U,[\omega]}(\cF) = \mu_{c'}^{U,[\omega]}(\cE)$. It is well-defined because $\mu_c^{U,[\omega]}(\cF) = \mu_c^{U,[\omega]}(\cE) = 1$. If $c' = n$, then $\mu^{U,[\omega]}(\cF) = \mu^{U,[\omega]}(\cE)$ and by Equation (\ref{EQ:Expression ap}), $\Im(\overline{Z_\epsilon(\cE)}Z_\epsilon(\cF)) = 0$ so $\cF$ destabilises $\cE$ but not strictly.

Assume now that $c' < n$. Using Equation (\ref{EQ:Expression ap}), when $p \leq c' + c$, all terms of the form $\mu_j^{U,[\omega]}(\cE)\mu_{p - j}^{U,[\omega]}(\cF) - \mu_{p - j}^{U,[\omega]}(\cE)\mu_j^{U,[\omega]}(\cF)$ vanish when $c \leq j \leq \left\lfloor \frac{p}{2} \right\rfloor$. When $p = c' + c + 1$, all such term vanish too expect when $j = c = p - c' - 1$. We deduce that,
$$
\forall p \leq c' + c, a_p = 0,
$$
and,
\begin{align*}
    a_{c' + c + 1} & = \Im(\overline{\rho_{n - c}}\rho_{n - c' - 1})\deg_c^{U,[\omega]}(\cE)\deg_c^{U,[\omega]}(\cF)(\mu_c^{U,[\omega]}(\cE)\mu_{c' + 1}^{U,[\omega]}(\cF) - \mu_{c' + 1}^{U,[\omega]}(\cE)\mu_c^{U,[\omega]}(\cF))\\
    & = \Im(\overline{\rho_{n - c}}\rho_{n - c' - 1})\deg_c^{U,[\omega]}(\cE)\deg_c^{U,[\omega]}(\cF)(\mu_{c' + 1}^{U,[\omega]}(\cF) - \mu_{c' + 1}^{U,[\omega]}(\cE)).
\end{align*}
We obtain the wanted equivalence because $\Im(\overline{\rho_{n - c}}\rho_{n - c' - 1})\deg_c^{U,[\omega]}(\cE)\deg_c^{U,[\omega]}(\cF) > 0$.
\end{proof}

\begin{corollary}
    If $\rho$ is adapted to sheaves of dimension $d$ and $\cE$ has dimension $d$, it is asymptotically $Z$-(semi-)stable if and only if for all $0 \subsetneq \cF \subsetneq \cE$, $\mu^{U,[\omega]}(\cF) \ltx (\lex)\ \mu^{U,[\omega]}(\cE)$.
\end{corollary}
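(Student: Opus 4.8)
The plan is to deduce this directly from Theorem \ref{THE:Caracterisation stabilité}; the single extra ingredient is that $\lex$ is a total order, which is what lets me convert ``no sub-sheaf destabilises'' into ``every sub-sheaf satisfies the reverse inequality''. Since $\cE$ is of pure dimension $d$ and $\rho$ is adapted to sheaves of dimension $d$, the theorem applies to every sub-sheaf $\cF$ of $\cE$, and by definition $\cE$ is asymptotically $Z$-semi-stable exactly when no non-zero proper sub-sheaf strictly destabilises it, and asymptotically $Z$-stable exactly when no non-zero proper sub-sheaf destabilises it at all.

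For the semi-stable case I would invoke part (1) of the theorem: a sub-sheaf $\cF$ strictly destabilises $\cE$ if and only if $\mu^{U,[\omega]}(\cF) \gtx \mu^{U,[\omega]}(\cE)$. Thus ``$\cE$ is not strictly destabilised by $\cF$'' is the negation of $\mu^{U,[\omega]}(\cF) \gtx \mu^{U,[\omega]}(\cE)$, and since $\lex$ is total this negation is precisely $\mu^{U,[\omega]}(\cF) \lex \mu^{U,[\omega]}(\cE)$. Quantifying over all $0 \subsetneq \cF \subsetneq \cE$ gives the first equivalence. For the stable case I would use part (3) of the theorem verbatim: $\cF$ fails to destabilise $\cE$ if and only if $\mu^{U,[\omega]}(\cF) \ltx \mu^{U,[\omega]}(\cE)$, and quantifying over all non-zero proper sub-sheaves gives the second equivalence.

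There is no genuine obstacle here, as all the work is already contained in Theorem \ref{THE:Caracterisation stabilité}; the only points demanding a little care are the convention that the sub-sheaves in the definition of asymptotic $Z$-stability are the non-zero proper ones $0 \subsetneq \cF \subsetneq \cE$ (note that $\cF = \cE$ would destabilise non-strictly and must be excluded), and the reading of the hypothesis ``$\cE$ has dimension $d$'' as pure dimension $d$, which is what Theorem \ref{THE:Caracterisation stabilité} requires.
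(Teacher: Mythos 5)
Your proposal is correct and matches the paper's (implicit) argument: the corollary is stated as an immediate consequence of Theorem \ref{THE:Caracterisation stabilité}, obtained exactly as you describe by negating parts (1) and (3) using the totality of $\lex$ and the convention on which sub-sheaves count. Your two points of care --- excluding $\cF = \cE$ in the stable case and reading ``dimension $d$'' as pure dimension $d$ so that the theorem applies --- are the right ones.
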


\begin{corollary}
    If $\rho$ is adapted to sheaves of dimension $d$, if $\cE$ doesn't have pure dimension, then it admits a sub-sheaf of strictly lower dimension, which destabilises it strictly.
\end{corollary}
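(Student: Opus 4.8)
The plan is to extract the destabilising subsheaf directly from the failure of purity and then to invoke the leading-order estimate already established in the opening case of the proof of Theorem \ref{THE:Caracterisation stabilité}. Set $d = \dim(\cE)$ and $c = \codim(\cE) = n - d$. Since $\cE$ does not have pure dimension, by definition it possesses a nonzero subsheaf supported in dimension strictly smaller than $d$ (equivalently, the maximal subsheaf of $\cE$ of dimension $\leq d - 1$ is nonzero). Fix such a subsheaf $\cF \subseteq \cE$, say of dimension $d' < d$ and codimension $c' = n - d' > c$; this is the subsheaf of strictly lower dimension whose existence we must exhibit.

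It then remains to check that $\cF$ destabilises $\cE$ strictly. The key point is that the first case of the proof of Theorem \ref{THE:Caracterisation stabilité}, namely the case $\dim(\cF) < \dim(\cE)$, never uses the purity of $\cE$: it relies only on Proposition \ref{PRO:Premiers caractères de Chern}, whose hypothesis is merely that the sheaf have a prescribed codimension. We may therefore reproduce that computation verbatim here. Concretely,
$$
Z_\epsilon(\cE) = \rho_d\deg_c^{U,[\omega]}(\cE)\epsilon^c + \mathrm{O}(\epsilon^{c + 1}), \qquad Z_\epsilon(\cF) = \rho_{d'}\deg_{c'}^{U,[\omega]}(\cF)\epsilon^{c'} + \mathrm{O}(\epsilon^{c' + 1}),
$$
with $\deg_c^{U,[\omega]}(\cE) = \Rk(\cE) > 0$ and $\deg_{c'}^{U,[\omega]}(\cF) = \Rk(\cF) > 0$, so that
$$
\Im(\overline{Z_\epsilon(\cE)}Z_\epsilon(\cF)) = \Im(\overline{\rho_d}\rho_{d'})\Rk(\cE)\Rk(\cF)\epsilon^{c + c'} + \mathrm{O}(\epsilon^{c + c' + 1}).
$$

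Since $\rho$ is adapted to sheaves of dimension $d$ and $d' < d$, the coefficient $\Im(\overline{\rho_d}\rho_{d'})$ is positive, and the generalised ranks $\Rk(\cE)$ and $\Rk(\cF)$ are positive integers; hence the leading coefficient above is strictly positive and $\Im(\overline{Z_\epsilon(\cE)}Z_\epsilon(\cF)) > 0$ for all sufficiently small $\epsilon > 0$. By Lemma \ref{LEM:Caractérisation stabilité asymptotique} this is precisely the assertion that $\cF$ destabilises $\cE$ strictly, which completes the argument. The only substantive input is the existence of the lower-dimensional subsheaf, which is exactly what non-purity provides; the main (and only mild) obstacle is to verify that the cited computation from Theorem \ref{THE:Caracterisation stabilité} is genuinely insensitive to purity of $\cE$, which it is, since it passes only through Proposition \ref{PRO:Premiers caractères de Chern} and the hypothesis $\Im(\overline{\rho_d}\rho_{d'}) > 0$ supplied by adaptedness.
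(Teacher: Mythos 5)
Your proposal is correct and follows exactly the route the paper intends: the corollary is left as an immediate consequence of the first case ($\dim(\cF) < \dim(\cE)$) of the proof of Theorem \ref{THE:Caracterisation stabilité}, whose leading-order computation depends only on Proposition \ref{PRO:Premiers caractères de Chern} and the adaptedness inequality $\Im(\overline{\rho_d}\rho_{d'}) > 0$, not on purity of $\cE$. Your explicit observation that non-purity supplies the nonzero lower-dimensional subsheaf (the torsion part $T_{d-1}(\cE)$) is precisely the missing link the paper leaves implicit.
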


\begin{corollary}
    Once we assume that $\rho$ is adapted to sheaves of dimension $d$, stability of sheaves of pure dimension $d$ doesn't depend on the particular choice of $\rho$.
\end{corollary}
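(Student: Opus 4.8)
The plan is to deduce this directly from Theorem \ref{THE:Caracterisation stabilité}. The key observation is that the characterisation of (strict) destabilisation given there is phrased entirely in terms of the lexicographic comparison of the slope vectors $\mu^{U,[\omega]}(\cF)$ and $\mu^{U,[\omega]}(\cE)$, and these vectors depend only on the twisted Chern character $\ch^U$ and the polarisation $[\omega]$; they involve no reference whatsoever to the stability vector $\rho$.

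First I would fix two stability vectors $\rho$ and $\rho'$, both adapted to sheaves of dimension $d$, and a sheaf $\cE$ of pure dimension $d$. For an arbitrary sub-sheaf $0 \subsetneq \cF \subsetneq \cE$, Theorem \ref{THE:Caracterisation stabilité} applied to $\rho$ tells me that $\cF$ destabilises $\cE$ (respectively strictly destabilises it) for the central charge built from $\rho$ if and only if $\mu^{U,[\omega]}(\cF) \gex \mu^{U,[\omega]}(\cE)$ (respectively $\mu^{U,[\omega]}(\cF) \gtx \mu^{U,[\omega]}(\cE)$). Since the right-hand conditions do not mention $\rho$, applying the same theorem to $\rho'$ yields exactly the same criterion. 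Hence $\cF$ destabilises $\cE$ with respect to $\rho$ if and only if it does with respect to $\rho'$, and likewise for strict destabilisation.

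Quantifying over all non-zero proper sub-sheaves $\cF$ then gives that $\cE$ is asymptotically $Z$-(semi-)stable for $\rho$ if and only if it is for $\rho'$, which is the claim. There is essentially no obstacle here: the entire force of the statement has already been absorbed into Theorem \ref{THE:Caracterisation stabilité}, whose very purpose is to replace the $\rho$-dependent sign condition $\Im(\overline{Z_\epsilon(\cE)}Z_\epsilon(\cF)) \leq 0$ by a $\rho$-free comparison of slope vectors. The only point to keep track of is that both $\rho$ and $\rho'$ must be adapted to dimension $d$, so that the theorem is genuinely applicable to each of them.
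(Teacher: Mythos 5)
Your argument is correct and is exactly the intended one: the paper states this as an immediate consequence of Theorem \ref{THE:Caracterisation stabilité}, whose characterisation of (strict) destabilisation is phrased purely in terms of the $\rho$-independent slope vectors $\mu^{U,[\omega]}$. Nothing further is needed.
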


For each $d$, we define the stability condition
$$
P_{Z,d} : \cE \mapsto (\deg_{n - d}^{U,[\omega]}(\cE),\ldots,\deg_n^{U,[\omega]}(\cE)) \in \R^d.
$$
$\R^d$ is endowed with its lexicographic order. Similarly, we could have used the space of real polynomials. A consequence of Theorem \ref{THE:Caracterisation stabilité} is that, if $\rho$ is adapted to sheaves of dimension $d$, for any sheaf $\cE$ of pure dimension $d$, $\cE$ is asymptotically $Z$-(semi-)(poly)stable if and only if it is $P_{Z,d}$-(semi-)(poly)stable. Moreover, $P_{Z,d}$ is itself adapted to sheaves of dimension $d$ in the sense of Definition \ref{DEF:Adapté}.

\subsection{An example : Gieseker stability}

We show here that Gieseker stability is a particular case of asymptotic $Z$-stability. In \cite[Section 4.2]{Keller_Scarpa}, Gieseker stability is realised as an asymptotic $Z$-stability but with a stability vector $\rho$ depending on the parameter $k = \epsilon^{-1}$. We show here that we can choose $\rho$ independent of this parameter.

Let $U = \Td(X)$, $\rho$ any stability vector adapted to torsion-free sheaves and $\omega \in c_1(\cL)$ be any Kähler form with $\cL$ an ample bundle. Then, for all torsion-free sheaf $\cE$ and all integer $k$, by the Hirzebruch--Riemann--Roch formula,
\begin{align*}
    \chi(\cE \otimes \cL^k) & = \int_X \Td(X) \cup \ch(E \otimes \cL^k)\\
    & = \int_X \ch^{\Td(X)}(\cE) \cup \e^{k[\omega]}\\
    & = \sum_{i = 0}^n \ch_i^{\Td(X)}(\cE) \cup \frac{[\omega]^i}{i!}k^i\\
    & = \sum_{i = 0}^n \frac{\deg_i^{\Td(X),[\omega]}(\cE)}{i!}k^i.
\end{align*}

By Theorem \ref{THE:Caracterisation stabilité}, we easily see that $\cE$ is Gieseker stable if and only if it is asymptotically $Z$-stable for the central charge $Z$ associated with $([\omega],\rho,\Td(X))$.

\begin{remark}
By choosing $\rho$ adapted to coherent sheaves, we obtain a natural way to extend Gieseker stability to any coherent sheaf. Gieseker stability so extended to all sheaves is sometimes referred to as Simpson stability \cite{Simpson2,Rudakov}. In \cite[2.1]{Bayer}, Bayer notices that this stability condition can be expressed as a polynomial central charge. The above proposition shows that it can actually be expressed as the particular kind of central charge given by \cite[Theorem 3.2.2]{Bayer} by choosing the stability vector adapted to coherent sheaves.
\end{remark}

We show in the next example that with the right choice of $\rho$, we can recover Leung's almost Hermitian Einstein equation \cite{Leung}.

\begin{example}\label{EX:Leung}
    If we set $\rho_n = \i$, for all $k < n$, $\rho_k = \frac{1}{k!}(\i - 1)$ and $u = \td(X,\omega)$ the canonical representative of the Todd class of $X$, for all holomorphic Hermitian vector bundle $(\cE,h)$ the $Z_\epsilon$-critical equation is Leung's almost Hermitian-Einstein equation \cite{Leung}.
\end{example}
\begin{proof}
Recall that $\hat{F}(h)$ is the reduced curvature form of $(\cE,h) = (E,\dbar,h)$. We have,
\begin{align*}
    \Im(\overline{Z_\epsilon(\cE)}\cZ_\epsilon(\cE,h)) & = \Im\left(\left(\sum_{i = 0}^n \overline{\rho_{n - i}}\deg_i^{\Td(X),[\omega]}(\cE)\epsilon^i\right)\left(\left(\sum_{j = 0}^n \rho_{n - j}\epsilon^j\omega^{n - j}\right) \wedge \td(X,\omega) \wedge \e^{\hat{F}(h)}\right)\right)^{(n,n)}\\
    & = \left(\sum_{i,j = 0}^n \underbrace{\Im(\overline{\rho_{n - i}}\rho_{n - j})}_{= 0 \textrm{ if } ij \neq 0}\deg_i^{\Td(X),[\omega]}(\cE)\omega^{n - j} \wedge \td(X,\omega) \wedge \e^{\hat{F}(h)}\epsilon^{i + j}\right)^{(n,n)}\\
    & = \Biggl(\sum_{j = 1}^n \frac{1}{(n - j)!}\rk(\cE)\omega^{n - j} \wedge \td(X,\omega) \wedge \e^{\hat{F}(h)}\epsilon^j\\
    &\ \ \ \  - \sum_{i = 1}^n \frac{1}{(n - i)!}\deg_i^{\Td(X),[\omega]}(\cE)\omega^n \wedge \td(X,\omega) \wedge \e^{\hat{F}(h)}\epsilon^i\Biggl)^{(n,n)}\\
    & = \rk(\cE)\epsilon^n\left(\e^{\hat{F}(h) + \epsilon^{-1}\omega\Id_E} \wedge \td(X,\omega)\right)^{(n,n)} - \sum_{i = 0}^n \frac{1}{(n - i)!}\deg_i^{\Td(X),[\omega]}(\cE)\omega^n\Id_E.
\end{align*}
Therefore, $h$ is solution if and only if
$$
\left(\e^{\hat{F}(h) + \epsilon^{-1}\omega\Id_E} \wedge \td(X,\omega)\right)^{(n,n)} = \frac{1}{\Vol(X)}p_\cE(\epsilon^{-1})\frac{\omega^n}{n!}\Id_E.
$$
This is exactly Leung's almost Hermitian Einstein equation \cite{Leung}.
\end{proof}

Notice that the stability vector of Example \ref{EX:Leung} is adapted to torsion-free sheaves but not to coherent sheaves. It motivates the correspondence between asymptotic $Z$-stability and the existence $Z_\epsilon$-critical connections for $\epsilon > 0$ small. In particular, the Leung correspondence \cite[Theorem 1]{Leung} is a particular case of the Dervan--McCarthy--Sektnan correspondence \cite[Theorem 1.1]{DMS}.

\subsection{A characterisation of being adapted}

Recall that a stability vector is adapted to coherent sheaves if and only if it is adapted to torsion-free sheaves and Bayer (Lemma \ref{LEM:Bayer adapté}). In this sub-section, we give some other necessary and sufficient conditions on the polynomial central charge for the stability vector to be adapted to coherent sheaves. In particular, point (3) justifies the terms "adapted to coherent sheaves".

\begin{proposition}\label{PRO:Caractérisation adapté}
    Let $\rho$ be a Bayer stability vector and $U = 1 + \sum_{i = 1}^n U_i \in H^{*,*}(X,\C)$. Let $Z_\epsilon$ be the associated polynomial central charge. We have equivalence between,
    \begin{enumerate}
        \item $\rho$ is adapted to torsion-free sheaves.
        \item $\rho$ is adapted to coherent sheaves.
        \item Up to a common rotation of the $\rho_i$, the heart of bounded $t$-structure $\cA$ built by Bezrukavnikov and Kashiwara \cite{Bezrukavnikov}\cite{Kashiwara}\cite[Theorem 3.1.2]{Bayer} associated to the Bayer stability condition $Z_\epsilon$ is $\Coh(X)$ up to an even number of shifts.
        \item $\Im(\overline{\rho_n}\rho_0) > 0$ and there is a $\lambda \in \C^*$ such that for all non-zero sheaf $\cS$, for all $\epsilon$ small enough (depending on $\cS$), $\lambda Z_\epsilon(\cS) \in \H$ where,
        $$
        \H = \{z \in \C|\Im(z) > 0 \textrm{ or } z \in \R_-^*\} = \{r\e^{i\pi\theta}|r > 0, 0 < \theta \leq 1\}.
        $$
        \item $\Im(\overline{\rho_n}\rho_0) \neq 0$ and there is a $\lambda \in \C^*$ such that for all smooth sub-variety $i : V \rightarrow X$, for all $\epsilon$ small enough (depending on $V$), $\lambda Z_\epsilon(i_*\cO_V) \in \H$.
    \end{enumerate}
\end{proposition}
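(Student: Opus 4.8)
The plan is to run the cycle $(1)\Leftrightarrow(2)\Rightarrow(4)\Rightarrow(5)\Rightarrow(2)$ and to treat $(2)\Leftrightarrow(3)$ separately. Since $\rho$ is Bayer by hypothesis, $(1)\Leftrightarrow(2)$ is precisely Lemma \ref{LEM:Bayer adapté}, so only the three arrows and the heart statement remain. Before anything else I would record a reformulation of ``adapted'': being Bayer means the $\rho_i$ admit real argument representatives with $\arg\rho_0>\arg\rho_1>\dots>\arg\rho_n$ and every consecutive gap in $(0,\pi)$. Unwinding $\Im(\overline{\rho_d}\rho_i)>0$ as $\arg\rho_i-\arg\rho_d\in(0,\pi)\bmod 2\pi$, and noting that the partial sums of the gaps cannot escape $(0,\pi)$ without some intermediate difference landing in $(\pi,2\pi)$, one gets that $\rho$ is adapted to coherent sheaves if and only if moreover the total spread satisfies $\arg\rho_0-\arg\rho_n<\pi$; equivalently the $\rho_i$ lie in an open sector of angle $<\pi$, ordered by argument. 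The arrow $(4)\Rightarrow(5)$ is then immediate: each $i_*\cO_V$ is a nonzero sheaf and $\Im(\overline{\rho_n}\rho_0)>0$ implies $\neq 0$.

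For $(2)\Rightarrow(4)$ I would use the sector picture: choose $\lambda$ so that the whole closed sector spanned by the $\rho_i$ is rotated into the open upper half-plane (possible because its angle is $<\pi$, e.g. by centring it around $\arg=\pi/2$). By Proposition \ref{PRO:Premiers caractères de Chern} and \eqref{EQ:Expression charge centrale}, for a nonzero sheaf $\cS$ of dimension $d$ the bottom term of $Z_\epsilon(\cS)$ is $\rho_d\,\Rk(\cS)\,\epsilon^{\,n-d}$ with $\Rk(\cS)>0$, so the bottom term of $\lambda Z_\epsilon(\cS)$ lies in the open upper half-plane; hence $\Im(\lambda Z_\epsilon(\cS))>0$ for $\epsilon$ small and $\lambda Z_\epsilon(\cS)\in\H$. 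The condition $\Im(\overline{\rho_n}\rho_0)>0$ is the $d=n$, $i=0$ instance of $(2)$.

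The crux is $(5)\Rightarrow(2)$, which I would prove by contraposition. Assume $\rho$ Bayer but not adapted, i.e. spread $\ge\pi$, and suppose $\lambda$ satisfies the hypothesis of $(5)$. A point is a smooth subvariety and $Z_\epsilon(i_*\cO_{\mathrm{pt}})=\rho_0\epsilon^n$ exactly (its only nonzero generalised degree is $\deg_n^{U,[\omega]}=1$), so $\lambda Z_\epsilon(i_*\cO_{\mathrm{pt}})\in\H$ forces $\arg(\lambda\rho_0)\in(0,\pi]$; fix the real representative so that $t+\arg\rho_0\in(0,\pi]$, where $t=\arg\lambda$. Then $t+\arg\rho_d$ is strictly decreasing in $d$, starts $>0$, and ends $\le 0$ because the spread is $\ge\pi$, so there is a least index $d^*$ with $t+\arg\rho_{d^*}\le 0$; since the last gap is $<\pi$ one gets $t+\arg\rho_{d^*}\in(-\pi,0]$. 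By Bertini there is a smooth subvariety of every dimension (points, complete intersections of general members of a large multiple of $[\omega]$, and $X$ itself), and for such $V$ the bottom term of $\lambda Z_\epsilon(i_*\cO_V)$ is $\lambda\rho_{\dim V}\,(\int_V\omega^{\dim V})\,\epsilon^{\codim V}$ with positive coefficient. If $t+\arg\rho_{d^*}\in(-\pi,0)$ this term is in the open lower half-plane, so $\Im(\lambda Z_\epsilon(i_*\cO_{V}))<0$ for small $\epsilon$ and $\lambda Z_\epsilon\notin\H$, contradicting $(5)$. The only remaining case is $t+\arg\rho_{d^*}=0$; if $d^*<n$ then $t+\arg\rho_{d^*+1}$ is strictly in $(-\pi,0)$ and we conclude as before, while if $d^*=n$ then $t+\arg\rho_n=0$ and, since the point already forces $t+\arg\rho_0\le\pi$ whereas the spread is $\ge\pi$, we must have spread $=\pi$, i.e. $\arg\rho_0-\arg\rho_n=\pi$ and hence $\Im(\overline{\rho_n}\rho_0)=0$ — exactly what the hypothesis $\Im(\overline{\rho_n}\rho_0)\neq0$ of $(5)$ excludes. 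We conclude that $\rho$ is adapted to torsion-free sheaves, hence to coherent sheaves by Lemma \ref{LEM:Bayer adapté}. This last degenerate configuration is the main obstacle, and understanding why ``$\neq0$'' (rather than ``$>0$'') is the correct hypothesis in $(5)$ is the delicate point.

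Finally, for $(2)\Leftrightarrow(3)$ I would translate the explicit description of Bayer's heart. The bounded $t$-structure of \cite{Bezrukavnikov}\cite{Kashiwara} attached to a Bayer stability vector is a perverse heart governed by the perversity function recording the integer parts of the phases $\tfrac1\pi\arg\rho_d$, and a common rotation of the $\rho_i$ by $e^{\i\pi s}$ shifts all phases by $s$ and changes the heart by the shift $[\,\cdot\,]$. The perverse heart is the standard $\Coh(X)$ up to an even shift exactly when this perversity is constant, which, after a suitable rotation, holds if and only if the phases all lie in a single window of width $<1$, i.e. if and only if the arguments of the $\rho_i$ span a sector of angle $<\pi$ — the reformulation of $(2)$ from the first paragraph. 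I expect the only genuine work here to be conventions bookkeeping: matching the rounding convention in the perversity with the strict Bayer inequalities, and tracking the even shift produced by the global rotation.
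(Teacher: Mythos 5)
Your argument is correct, and it reorganises the equivalences in a genuinely different way from the paper. The paper runs the single chain $1\Leftrightarrow 2\Rightarrow 3\Rightarrow 4\Rightarrow 5\Rightarrow 2$, so that statement $3$ is load-bearing: the implication towards $4$ is extracted from Bayer's Theorem 3.2.2 (existence of the polynomial stability condition with a constant phase germ $\phi_0$, which then serves as $\lambda=\e^{-\i\pi\phi_0}$). You instead prove $2\Rightarrow 4$ directly and elementarily — making explicit the reformulation ``adapted $\Leftrightarrow$ the $\arg\rho_i$ span a sector of angle $<\pi$'', rotating that sector into the open upper half-plane and reading off the sign of the leading term $\lambda\rho_{\dim\cS}\Rk(\cS)\epsilon^{\codim\cS}$ — which detaches $3$ from the main cycle and frees the core equivalence $1\Leftrightarrow 2\Leftrightarrow 4\Leftrightarrow 5$ from the perverse-heart machinery entirely. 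Your $5\Rightarrow 2$ is the same underlying argument as the paper's (the leading term of $Z_\epsilon(i_*\cO_V)$ for Bertini subvarieties of every codimension forces $\lambda\rho_{n-p}\in\overline\H$, and the only borderline configuration is the anti-parallel one $\arg\rho_0-\arg\rho_n=\pi$, which is exactly what $\Im(\overline{\rho_n}\rho_0)\neq 0$ excludes), just organised by contraposition and a first-crossing index instead of the paper's direct bookkeeping with $\theta_i\in[0,1]$; your identification of the degenerate case as the reason for the hypothesis ``$\neq 0$'' rather than ``$>0$'' in $5$ matches the paper's closing step. For $2\Leftrightarrow 3$ you, like the paper, reduce to ``perversity constant $\Leftrightarrow$ phases in a common window'', and both treatments leave the rounding/shift conventions at the level of a sketch, so nothing is lost there. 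The trade-off: your route is more self-contained and makes the geometric picture of the stability vector explicit; the paper's route keeps a single short chain and gets $3\Rightarrow 4$ essentially for free from Bayer's construction.
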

\begin{proof}\ \\

\noindent\framebox{$1 \Leftrightarrow 2$} Lemma \ref{LEM:Bayer adapté}.\\

\noindent\framebox{$2 \Rightarrow 3$} Up to rotating the $\rho_i$, we may assume that $\rho_0 \in \R_-^*$ so for all $i \geq 1$, $\Im(\rho_i) > 0$. Using the notations of \cite{Bayer}, a perversity function $p$ associated to $\rho$ \cite[Definitions 3.1.1, 3.2.1]{Bayer} is necessarily constant and even. We easily compute that the associated heart of bounded $t$-structure is $\cA = \Coh(X)[p]$ \cite[Theorem 3.1.2]{Bayer}.\\

\noindent\framebox{$3 \Rightarrow 4$} By \cite[Definition 2.3.2, Theorem 3.2.2]{Bayer}, there is a continuous germ $\phi_0 : (\R \cup \{+\infty\},+\infty) \rightarrow \R$ such that for all sheaf $\cS$ and all $\epsilon > 0$ small enough,
$$
\phi_0(\epsilon^{-1}) < \phi(\cS)(\epsilon^{-1}) \leq \phi_0(\epsilon^{-1}) + 1,
$$
where $\phi(\cS)(\epsilon^{-1})$ is a well chosen argument of $Z_\epsilon(\cS)$ divided by $\pi$. The proof of \cite[Theorem 3.2.2]{Bayer} shows that we can choose $\phi_0$ constant hence the result with $\lambda = \e^{-i\pi\phi_0}$. We only have left to show that $\Im(\overline{\rho_n}\rho_0) > 0$.

The assumption on the heart $\cA$ implies that the perversity function associated to $\rho$ (up to a rotation) is constant and even thus all $\rho_i$ can be written as $\abs{\rho_i}\e^{i\pi\theta_i}$ with $0 < \theta_i \leq 1$. The assumption that $\rho$ is Bayer implies that for all $i$, $\theta_i < \theta_{i - 1}$. Therefore, $\theta_n < \theta_0$ so $\Im(\overline{\rho_n}\rho_0) > 0$.\\

\noindent\framebox{$4 \Rightarrow 5$} Trivial with the same $\lambda$.\\

\noindent\framebox{$5 \Rightarrow 2$} Let $V$ be an intersection of $p \geq 1$ generic hyperplanes of $\P^N$ with $X$ in $X \hookrightarrow \P^N$. By Bertini's theorem, $V$ is smooth and has pure codimension $p$ in $X$. By Proposition \ref{PRO:Premiers caractères de Chern}, $\deg_k^{U,[\omega]}(i_*\cO_V) = 0$ if $k < p$ and $\deg_p^{U,[\omega]}(i_*\cO_V) > 0$. Therefore,
$$
\lambda Z_\epsilon(i_*\cO_V) = \lambda\sum_{i = 0}^n \rho_{n - i}\deg_i^{U,[\omega]}(i_*\cO_V)\epsilon^i = \lambda\rho_{n - p}\deg_p^{U,[\omega]}(i_*\cO_V)\epsilon^p + \mathrm{O}(\epsilon^{p + 1}).
$$
By assumption, this quantity belongs to $\H$ for $\epsilon$ small enough so $\lambda\rho_{n - p} \in \overline{\H}$ for all $p$. We deduce that for all $i$, $\rho_i = \abs{\rho_i}\overline{\lambda}\e^{\i\pi\theta_i}$ for some $0 \leq \theta_i \leq 1$. The inequality $\Im(\overline{\rho_i}\rho_{i - 1}) > 0$ which holds by definition of Bayer stability vectors implies that $\abs{\rho_i} > 0$ and $\theta_i < \theta_{i - 1}$. Thus, when $i < j$, $\theta_j < \theta_i$ meaning that $\Im(\overline{\rho_j}\rho_i) > 0$ except maybe if $\theta_j = 0$ and $\theta_i = 1$.

For all $1 \leq i \leq n$, $0 \leq \theta_i < \theta_{i - 1} \leq 1$ so this equality can only occur at $j = n$ and $i = 0$ which is impossible by assumption.
\end{proof}

\begin{remark}
    If $P = P_{Z,d}$ and $\rho$ satisfies the equivalent conditions of Proposition \ref{PRO:Caractérisation adapté}, then the existence of HN filtrations for sheaves of pure dimension $d$ given by Theorem \ref{THE:FHN} can be recovered by Bayer's construction \cite[Theorem 3.2.2]{Bayer}.
\end{remark}

\bibliographystyle{plainurl}

\end{document}